\tikzset{>=stealth}
    \pgfplotsset{compat=1.18}
  \DeclareSymbolFont{AMSb}{U}{msb}{m}{n}
  \DeclareSymbolFontAlphabet{\mathbb}{AMSb}}
\DeclareMathAlphabet{\mathbx}{U}{BOONDOX-ds}{m}{n}
\SetMathAlphabet{\mathbx}{bold}{U}{BOONDOX-ds}{b}{n}
\DeclareMathAlphabet{\mathbbx}{U}{BOONDOX-ds}{b}{n}
\SetMathAlphabet{\mathcal}{bold}{U}{dutchcal}{b}{n}
\DeclareMathAlphabet{\mathbcal}{U}{dutchcal}{b}{n}
\newtheorem{theorem}{Theorem}[section]
\newtheorem{lemma}[theorem]{Lemma}
\newtheorem{corollary}[theorem]{Corollary}
\newtheorem{proposition}[theorem]{Proposition}
\newtheorem*{theproblem*}{Problem Statement}
\theoremstyle{definition}
\newtheorem{remark}[theorem]{Remark}
\newtheorem{definition}[theorem]{Definition}
\newtheorem{question}[theorem]{Question}
\newcommand{\zz}{\mathbx Z}   % blackboard bold Z
\newcommand{\ff}{\mathbx F}   % blackboard bold F
\newcommand{\kk}{\mathbx k}   % blackboard bold k
\newcommand{\oo}{\mathscr O}     % script O
\newcommand{\ord}{\mu\text{-}\mathrm{ord}} % mu-ordinary
\renewcommand{\epsilon}{\varepsilon}
\renewcommand{\phi}{\varphi}
\renewcommand{\geq}{\geqslant}
\renewcommand{\leq}{\leqslant}
\renewcommand{\ss}{\textnormal{ss}}  % supersingular notation
\numberwithin{equation}{section}
\colorlet{deewangcolor}{cyan!50}
\colorlet{emeraldcolor}{blue!30}
\colorlet{heidicolor}{magenta!50}
\colorlet{miacolor}{yellow!90}
\colorlet{sandracolor}{green!40!yellow}
\colorlet{stevencolor}{orange!60}
\title{The Ekedahl-Oort and Newton stratification of\\ the $\boldsymbol{\mathsf{GU}(3,2)}$ Shimura variety}
\subjclass[2020]{11G18, 14G35, 11G10}
\author{Emerald Andrews} 
\address{Department of Mathematics and Computer Science, Washington College, Chestertown, MD 21620, USA}
\email{estacy2@washcoll.edu}
\author{Deewang Bhamidipati} 
\address{Department of Mathematics and Statistics, Carleton College, Northfield, MN 55057, USA} 
\email{bdeewang@carleton.edu}
\author{Maria Fox} 
\address{Department of Mathematics, Oklahoma State University, Stillwater, OK 74078, USA}
\email{maria.fox@okstate.edu}
\author{Heidi Goodson} 
\address{Department of Mathematics, Brooklyn College and The Graduate Center, City University of New York, Brooklyn, NY 11210, USA}
\email{heidi.goodson@brooklyn.cuny.edu}
\author{\\ Steven R. Groen} 
\address{Korteweg-de Vries Institute for Mathematics, University of Amsterdam, Amsterdam, Netherlands}
\email{s.r.groen@uva.nl}
\author{Sandra Nair} 
\address{Department of Mathematics, Colorado State University, Fort Collins, CO 80523, USA}
\email{sandra.nair@colostate.edu}
\begin{document}

\begin{abstract}
    This paper concerns the characteristic-$p$ fibers of $\mathsf{GU}(3,2)$ Shimura varieties. Such Shimura varieties parametrize abelian varieties in characteristic $p$ of dimension $5$ with an action of signature $(3,2)$ by an order in an imaginary quadratic field in which $p$ is inert. We completely describe the interaction of two stratifications of these Shimura varieties: the Ekedahl-Oort stratification, based on the isomorphism class of the $p$-torsion subgroup scheme, and the Newton stratification, based on the isogeny class of the $p$-divisible group. We identify which Ekedahl-Oort and Newton strata intersect.
\end{abstract}
\maketitle
\section{Introduction}\label{sec:intro}

Shimura varieties of PEL type are moduli spaces of abelian varieties with certain additional structures. A classical example is the modular curve, a dimension 1 PEL type Shimura variety that parameterizes elliptic curves. There are two stratifications on the characteristic-$p$ fiber of PEL type Shimura varieties arising from their moduli interpretation:

\begin{enumerate}
    \item The \textbf{Ekedahl-Oort stratification}, based on the isomorphism classes of the $p$-torsion group schemes of the parametrized abelian varieties;
    \item The \textbf{Newton stratification}, based on the isogeny classes of the $p$-divisible groups of the parametrized abelian varieties.
\end{enumerate}

These stratifications coincide on the modular curve, differentiating ordinary and supersingular elliptic curves into separate strata. However, in a more general setting where the PEL type Shimura variety parametrizes higher-dimensional abelian varieties, the interactions between the Ekedahl-Oort and Newton stratifications are more complicated; c.f. the results in \cite{ViehmannWedhorn2013}. A particularly concrete setting to study interactions between these two stratifications is that of \emph{unitary Shimura varieties} $\mathcal{M}(q-b, b)$. These are moduli spaces of abelian varieties of dimension $q$ with an action of an order in an imaginary quadratic field $K$ that meets the ``signature $(q-b, b)$" condition. In this paper, we assume $p$ is inert in $K$ and describe completely how the Ekedahl-Oort stratification and the Newton stratification interact on $\mathcal{M}(3,2)$.

The interactions between Ekedahl-Oort and Newton stratifications of unitary Shimura varieties are well-established in some signatures. An easy case is signature $(q,0)$, since the unitary Shimura varieties with this signature are zero-dimensional, with a single Ekedahl-Oort stratum and a single Newton stratum. Those of signature $(q-1,1)$ have been extensively studied, for example, in \cite{bultel2006congruence,VollaardWedhorn}. Signature $(2,2)$ unitary Shimura varieties are studied in  \cite{howard2014supersingular}. Interestingly, \cite[Theorem A]{goertzhe} shows that these are all the signatures for which the Shimura variety is of Coxeter type, meaning every Newton stratum is a union of Ekedahl-Oort strata. 

Recently, the Ekedahl-Oort stratification of signature $(q-2,2)$ unitary Shimura varieties was studied in \cite{shimada2024supersingular} with a primary focus on the supersingular locus, the unique closed Newton stratum. In our paper, we focus on the signature $(3,2)$ and give a complete description of the interaction between the Ekedahl-Oort strata and all of the Newton strata. We do this using a variety of techniques, some of which were developed by the authors in \cite{RNT}, and others developed here. Many of our techniques can be applied to more general signatures, and we hope that our work will inspire further developments in this area of research.  

In Section~\ref{s:p-rank}, we use the fact that the Ekedahl-Oort and the Newton stratifications are both refinements of the $p$-rank stratification, restricting how the strata interact. In Section~\ref{s:SMV}, we apply the map from $\mathcal{M}(3,2)$ to the Siegel modular variety $\mathcal{A}_5$, forgetting the unitary structure. We employ various tools available for $\mathcal{A}_5$, such as generic slopes and minimal Ekedahl-Oort strata, to gain information about $\mathcal{M}(3,2)$. In Section~\ref{s:construction} we explicitly construct a point in the intersection of an Ekedahl-Oort stratum and a Newton stratum to settle the last remaining question. We completely describe the interaction between the two stratifications of $\mathcal{M}(3,2)$ in Theorem~\ref{thm:main}, 

\subsection*{Acknowledgments}

This project started at the Rethinking Number Theory workshop in June 2022. We thank the organizers for the workshop, and are grateful for the supportive, collaborative environment this workshop provided. The workshop was supported by the Number Theory Foundation, the American Institute of Mathematics, and the University of Wisconsin-Eau Claire.  Additionally, we thank Rachel Pries for helpful feedback on this project's initial results.

M.F. was supported in part by NSF MSPRF Grant 2103150. H.G. was supported by NSF grant DMS-2201085. The authors thank AIM and NSF for their support through the valuable American Institute of Mathematics (AIM) SQuaRE program, which was crucial in this collaboration. 

%------------------------------
%------------------------------

\section{Background}\label{sec:background}

Unitary Shimura varieties are moduli spaces of abelian varieties equipped with extra structure, including an action of an order in an imaginary quadratic field $K$. We fix a prime $p > 2$ which is inert in $K$. The main object of study for this paper is the char $p$ unitary Shimura variety $\mathcal{M}(3,2)$. This is a quasi-projective variety of dimension $6$ defined over $\ff_{p^2}$, the residue field of $K$ at $p$. It arises as the char $p$ fiber of an integral unitary Shimura variety associated to a PEL datum, constructed by Kottwitz \cite{Ko}; see \cite{vollaard} or \cite[Section 2.1]{RNT} for a detailed definition.

Let $\kk$ be an algebraic closure of $\ff_{p^2}$. The $\kk$-points of $\mathcal{M}(3,2)$ correspond to isomorphism classes of tuples $(A, \iota, \lambda, \xi)$, where $A$ is an abelian variety over $\kk$ of dimension $5$, with a prime-to-p quasi-polarization $\lambda$, an action $\iota$ of signature $(3,2)$ by an order in an imaginary quadratic field $K$ in which $p$ is inert, and level structure $\xi$.  Two tuples $(A, \iota, \lambda, \xi)$ and $(A', \iota', \lambda', \xi')$ are isomorphic if there exists a prime-to-$p$ isogeny from $A$ to $A'$, commuting with the action of $\oo_K \otimes_{\zz} \zz_{(p)}$, mapping $\xi$ to $\xi'$ and $\lambda$ to a $\mathbb{Z}_{(p)}^\times$-multiple of $\lambda'$. 

In our work, we are interested in characterizing the interaction between two stratifications of the unitary Shimura variety $\mathcal{M}(3,2)$ called the Ekedahl-Oort stratification and the Newton stratification. We define these and set up the problem of interest in the remainder of this section.

\subsection{Ekedahl-Oort strata} \label{ss:EO}

The Ekedahl-Oort stratification partitions the special fibers of PEL type Shimura varieties on the basis of isomorphism classes of the $p$-torsion group schemes of the underlying abelian varieties with extra structures. See \cite{ViehmannWedhorn2013} for an excellent exposition of Ekedahl-Oort strata of PEL type Shimura varieties. For unitary Shimura varieties $\mathcal{M}(a,b)$, two $\kk$-points $(A,\iota, \lambda, \xi)$ and $(A', \iota', \lambda', \xi')$ of $\mathcal{M}(a,b)$ are in the same \textbf{Ekedahl-Oort stratum} if there exists an isomorphism between $A[p]$ and $A'[p]$ that respects the induced action and polarization.

It is outlined in \cite{Moonen2001} that Ekedahl-Oort strata of PEL type Shimura varieties correspond bijectively to cosets of a relevant Weyl group (which depends on the PEL datum). In the case of $\mathcal{M}(a,b)$ with $a+b=q$, let $\mathfrak{S}_q$ be the symmetric group on $q$ elements. Given a permutation $\omega \in \mathfrak{S}_q$, its \textbf{length} $\ell(\omega)$ is defined to be the length of the shortest expression that writes $\omega$ as a product of simple reflections. Equivalently, $\ell(\omega)$ equals the cardinality of the set 
\begin{equation} \label{eq:length}
    \mathrm{Inv}(\omega) := \{ (i,j) \; | \; i<j \text{ and } \omega(i) > \omega(j) \}.
\end{equation}
In \cite[Theorem 6.7]{Moonen2001}, Moonen proves a bijection between the set of Ekedahl-Oort strata of $\mathcal{M}(a,b)$ and the cosets in $(\mathfrak{S}_a \times \mathfrak{S}_b) \setminus \mathfrak{S}_q$. Each coset has a unique representative of minimal length and these minimal length representatives are given explicitly by
\begin{equation*} \label{eq:gammauv}
    \gamma_{u_1,\ldots ,u_b} := (b, b+1, \ldots ,u_b) \cdots (2, 3, \ldots ,u_2)(1,2,\ldots,u_1)
\end{equation*}
for $1\leq u_1 < u_2 < \ldots < u_b\leq q$, by \cite[Proposition~3.2]{RNT}. Finally, define
\begin{equation*} \label{eq:W(a,b)}
    \mathbf{W}(a,b) := \{ \gamma_{u_1,\ldots , u_b} \; | \; 1 \leq u_1 < \ldots < u_b\leq q\}
\end{equation*}
so that \cite[Theorem 6.7]{Moonen2001} gives a bijection between $\mathbf{W}(a,b)$ and the set of Ekedahl-Oort strata of $\mathcal{M}(a,b)$. Under this correspondence, the length of the permutation equals the dimension of the Ekedahl-Oort stratum, where $\ell(\gamma_{u_1,\ldots ,u_b}) = \sum_{i=1}^b(u_i - i)$ \cite[Lemma 3.1]{RNT}.

In particular, \cite[Corollary 3.6]{RNT} provides that the Ekedahl-Oort strata of $\mathcal{M}(3,2)$ are represented by the permutations 
$$\gamma_{1,2},\gamma_{1,3},\gamma_{1,4},\gamma_{1,5},\gamma_{2,3},\gamma_{2,4},\gamma_{2,5},\gamma_{3,4}, \gamma_{3,5}, \gamma_{4,5} \in \mathfrak{S}_5.$$
Given a permutation $\gamma_{u,v}$, let $\mathcal{M}(3,2)_{\gamma_{u,v}}$ be the Ekedahl-Oort stratum of $\mathcal{M}(3,2)$ that corresponds to $\gamma_{u,v}$ via \cite[Theorem 6.7]{Moonen2001}; observe that $\dim \mathcal{M}(3,2)_{\gamma_{u,v}} = \ell(\gamma_{u,v}) = u + v - 3$.

\subsection{Newton strata} \label{ss:Newton}

The Newton stratification partitions the special fiber of PEL type Shimura varieties on the basis of the isogeny classes of the $p$-divisible groups of the underlying abelian varieties with extra structures. More precisely, two points $(A,\iota, \lambda, \xi)$ and $(A',\iota', \lambda' ,\xi')$ are in the same Newton stratum if there exists an isogeny between the $p$-divisible groups $A[p^\infty]$ and $A'[p^\infty]$ that respects the induced action and polarization. 

By the Dieudonn\'{e}-Manin theorem, any $p$-divisible group decomposes into simple isoclinic factors up to isogeny. For coprime non-negative integers $m$ and $n$, let $G_{m,n}$ be an isoclinic $p$-divisible group of dimension $m$, codimension $n$ and height $m+n$. Up to isogeny, it decomposes as:
\begin{equation} \label{eq:DDManin}
A[p^\infty] \sim \prod_{i=1}^r G_{m_i,n_i}^{\ell_i}.
\end{equation}
Here $\ell_i$ is called the multiplicity of $G_{m_i,n_i}$. For the $p$-divisible groups on both sides of Equation~\eqref{eq:DDManin} to have the same height and dimension, we must have
\[ \sum_{i=1}^r \ell_im_i = \sum_{i=1}^r \ell_in_i=q. \]
One then defines the \textbf{slopes} $\alpha_i=\frac{m_i}{m_i+n_i}$. The multiset of slopes of $A$ is written as $\alpha_A = \left[\alpha_1^{\ell_1} , \ldots ,\alpha_r^{\ell_r} \right]$.
After relabeling factors in Equation~\eqref{eq:DDManin} if necessary, we may assume $0 \leq \alpha_1 < \cdots < \alpha_r \leq 1$. Since $A$ is principally polarized, $G_{m_i,n_i}$ and $G_{n_i,m_i}$ occur with the same multiplicity. Therefore we have $\alpha_i+\alpha_{r+1-i} = 1$ for all $1 \leq i \leq r$. Connecting line segments with the slopes in $\alpha_A$ gives a polygon from $(0,0)$ to $(2q,q)$. We shall refer to this polygon as the \textbf{Newton polygon} of $A$.

Given a $\kk$-point $(A,\iota,\lambda,\xi)$ on $\mathcal{M}(a,b)$, there are additional restrictions on the Newton polygon of $A$ (for details, see \cite{rapoport1996classification}).

For $\mathcal{M}(3,2)$, there are four possible Newton polygons:
\begin{enumerate}
    \item The Newton polygon with slopes $\beta_{\ss} = \left [\frac{1}{2}^5 \right ]$;
    \item The Newton polygon with slopes $\beta_1 = \left [\frac{1}{4}, \frac{1}{2}, \frac{3}{4}   \right]$;
    \item The Newton polygon with slopes $\beta_2 = \left[ \frac{0}{1}^2, \frac{1}{2}^3, \frac{1}{1}^2  \right]$;
    \item The Newton polygon with slopes $\beta_{\text{max}} = \left[ \frac{0}{1}^4, \frac{1}{2}, \frac{1}{1}^4 \right].$ 
\end{enumerate}

Given a multiset $\alpha$ of slopes, we denote by $\mathcal{M}(3,2)^\alpha$ the Newton stratum corresponding to $\alpha$ via the Dieudonn\'{e}-Manin theorem. The unique closed Newton stratum $\mathcal{M}(3,2)^{\ss} = \mathcal{M}(3,2)^{\beta_{\ss}}$ is known as the \textbf{supersingular locus} or \textbf{basic locus}. On the other extreme, the open and dense Newton stratum $\mathcal{M}(3,2)^{\ord} = \mathcal{M}(3,2)^{\beta_{\mathrm{max}}}$ is known as the \textbf{$\mathbold{\mu}$-ordinary locus}.

Recall that our goal is to describe the interaction between the Ekedahl-Oort stratification and the Newton stratification of $\mathcal{M}(3,2)$. The following question captures this more precisely.

\begin{question} \label{q}
Given a permutation $\gamma_{u,v} \in \mathbf{W}(3,2)$ and a multiset of slopes $\alpha$, does the Ekedahl-Oort stratum $\mathcal{M}(3,2)_{\gamma_{u,v}}$ intersect the Newton stratum $\mathcal{M}(3,2)^\alpha$?
\end{question}

A complete answer to Question~\ref{q} is given in Theorem~\ref{thm:main}. In Sections \ref{s:p-rank} -- \ref{s:construction} we describe a variety of techniques used to describe the interaction between the stratifications.

\section{Arguments based on \texorpdfstring{$p$}{}-ranks} \label{s:p-rank}

Our goal is to understand the interaction between the Ekedahl-Oort stratification and the Newton stratification of $\mathcal{M}(3,2)$. The first tool that we use to achieve this goal is the $p$-rank. Recall that $\kk$ is an algebraically closed field of characteristic $p$. The \textbf{$\boldsymbol{p}$-rank} of an abelian variety $A$ over $\kk$ is the integer $0 \leq f_A \leq \dim A$ such that
\begin{equation} \label{eq:p-rank}
A[p](\kk) \cong (\zz/p \zz)^{f_A}.
\end{equation}
Two $\kk$-points $(A,\iota,\lambda, \xi)$ and $(A',\iota', \lambda',\xi')$ of $\mathcal{M}(3,2)$ are in the same \textbf{$\boldsymbol{p}$-rank stratum} if $A$ and $A'$ have the same $p$-rank. For $0 \leq f \leq 5$, we denote by $\mathcal{M}(3,2)^{(f_A=f)}$ the $p$-rank $f$ stratum of $\mathcal{M}(3,2)$. The closure of a $p$-rank stratum is the union of all smaller $p$-rank strata.

We note that the Ekedahl-Oort stratification is a refinement of the $p$-rank stratification. In particular, if $M$ is the mod-$p$ Dieudonn\'{e} module corresponding to an Ekedahl-Oort stratum, then the $p$-rank of $M$ equals the $\kk$-dimension of the largest subspace of $M$ on which $F$ acts bijectively. 

Since the $p$-rank is an invariant under isogenies, it follows that the Newton stratification is also a refinement of the $p$-rank stratification. In particular, $f_A$ equals the multiplicity of the slope $0$ in the Newton polygon of $A$. It follows from the list of Newton polygons that the non-empty $p$-rank strata of $\mathcal{M}(3,2)$ are $\mathcal{M}(3,2)^{(f_A=0)}$, $\mathcal{M}(3,2)^{(f_A=2)}$ and $\mathcal{M}(3,2)^{(f_A=4)}$.

Since the Ekedahl-Oort stratification and the Newton stratification both refine the $p$-rank stratification, an Ekedahl-Oort stratum and a Newton stratum can only intersect if they are contained in the same $p$-rank stratum. This observation has significant consequences on Question~\ref{q}.

\begin{lemma} \label{l:p-rank}
We have  
\begin{align}
    \mathcal{M}(3,2)^{(f_A=4)} & = \mathcal{M}(3,2)^{\ord} = \mathcal{M}(3,2)_{\gamma_{4,5}}  ,\label{eq:p-rank 4} \\ 
    \mathcal{M}(3,2)^{(f_A=2)} &= \mathcal{M}(3,2)^{\beta_2} = \mathcal{M}(3,2)_{\gamma_{3,5}} \cup \mathcal{M}(3,2)_{\gamma_{2,5},} \label{eq:p-rank 2} \\
    \mathcal{M}(3,2)^{(f_A=0)} &= \mathcal{M}(3,2)^{\beta_1} \cup \mathcal{M}(3,2)^{\ss}= \bigcup_{v<5 \text{ or } u=1} \mathcal{M}(3,2)_{\gamma_{u,v}}. \label{eq:p-rank 0}
\end{align}
Furthermore, the following closure relations hold:
\begin{align}
    \overline{\mathcal{M}(3,2)^{\ord}} &= \overline{\mathcal{M}(3,2)_{\gamma_{4,5}}}=\mathcal{M}(3,2), \label{eq:p-rank 4 closure} \\
    \overline{\mathcal{M}(3,2)^{\beta_2}} &= \overline{\mathcal{M}(3,2)_{\gamma_{3,5}}} = \mathcal{M}(3,2)^{(f_A=2)} \cup \mathcal{M}(3,2)^{(f_A=0)}, \label{eq:p-rank 2 closure} \\
    \overline{\mathcal{M}(3,2)^{\beta_1}} &= \overline{\mathcal{M}(3,2)_{\gamma_{3,4}}} = \mathcal{M}(3,2)^{(f_A=0)}. \label{eq:p-rank 0 closure}
\end{align}
\end{lemma}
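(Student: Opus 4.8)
The plan is to reduce the statement to two bookkeeping computations — the multiplicity of the slope $0$ in each of the four Newton polygons, and the $p$-rank of each of the ten Ekedahl-Oort strata — and then to feed these into the closure behaviour of the $p$-rank stratification recalled just before the statement. First I would treat the Newton strata: by that remark the $p$-rank of a point lying in a given Newton stratum equals the multiplicity of the slope $0$ in the corresponding polygon, and for $\beta_{\ss}, \beta_1, \beta_2, \beta_{\max}$ these multiplicities are $0, 0, 2, 4$. Since the four Newton strata partition $\mathcal{M}(3,2)$, this immediately yields the middle equalities in \eqref{eq:p-rank 4}--\eqref{eq:p-rank 0}: $\mathcal{M}(3,2)^{(f_A=4)} = \mathcal{M}(3,2)^{\beta_{\max}} = \mathcal{M}(3,2)^{\ord}$, $\mathcal{M}(3,2)^{(f_A=2)} = \mathcal{M}(3,2)^{\beta_2}$, and $\mathcal{M}(3,2)^{(f_A=0)} = \mathcal{M}(3,2)^{\beta_1}\cup\mathcal{M}(3,2)^{\ss}$.

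\textbf{Ekedahl-Oort side.} Next I would compute, for each $\gamma_{u,v}$, the $p$-rank of $\mathcal{M}(3,2)_{\gamma_{u,v}}$ directly from the associated mod-$p$ Dieudonn\'e module $M = M_0\oplus M_1$, with its $\oo_K/p$-action and polarization (the type $\gamma_{u,v}$ determines this $\mathrm{BT}_1$ explicitly; cf.\ \cite{Moonen2001, RNT}). By the remark recalled above, this $p$-rank equals $\dim_\kk\bigcap_{n\geq1}F^nM$, the dimension of the largest subspace on which $F$ is bijective; since $F$ interchanges $M_0$ and $M_1$ it is automatically even, and a short linear-algebra check for each of the ten types shows it is $4$ only for $\gamma_{4,5}$, is $2$ only for $\gamma_{3,5}$ and $\gamma_{2,5}$, and is $0$ for the remaining seven. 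As the Ekedahl-Oort stratification refines the $p$-rank stratification, each $p$-rank stratum is a union of Ekedahl-Oort strata; combined with the list of $p$-ranks just obtained, this gives the Ekedahl-Oort unions in \eqref{eq:p-rank 4}--\eqref{eq:p-rank 0} — in particular $\mathcal{M}(3,2)^{\ord}=\mathcal{M}(3,2)_{\gamma_{4,5}}$, since $\gamma_{4,5}$ is the unique type of $p$-rank $4$.

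\textbf{Closure relations.} Since the closure of a $p$-rank stratum is the union of all smaller ones and the non-empty $p$-rank values are $0,2,4$, the identifications above give $\overline{\mathcal{M}(3,2)^{\ord}}=\mathcal{M}(3,2)$, $\overline{\mathcal{M}(3,2)^{\beta_2}}=\mathcal{M}(3,2)^{(f_A=2)}\cup\mathcal{M}(3,2)^{(f_A=0)}$, and $\overline{\mathcal{M}(3,2)^{\beta_1}}=\mathcal{M}(3,2)^{(f_A=0)}$ — the Newton/$p$-rank halves of \eqref{eq:p-rank 4 closure}--\eqref{eq:p-rank 0 closure}. To match these closed sets with $\overline{\mathcal{M}(3,2)_{\gamma_{4,5}}}$, $\overline{\mathcal{M}(3,2)_{\gamma_{3,5}}}$, $\overline{\mathcal{M}(3,2)_{\gamma_{3,4}}}$, I would use that the closure of an Ekedahl-Oort stratum is a union of Ekedahl-Oort strata together with the closure order on $\mathbf{W}(3,2)$ recorded in \cite{RNT}: $\gamma_{4,5}$, $\gamma_{3,5}$, $\gamma_{3,4}$ are maximal among the types of $p$-rank $\leq 4$, $\leq 2$, $=0$ respectively, and every type of $p$-rank $\leq f$ lies below the appropriate maximal one; this forces the desired equalities. (For $\gamma_{4,5}$ one may simply note it is the open dense $\mu$-ordinary stratum.)

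\textbf{Main obstacle.} The only non-formal input is the Ekedahl-Oort side: one must actually carry out the ten Dieudonn\'e-module computations to read off each $p$-rank, and for the closure relations one genuinely needs the partial order on $\mathbf{W}(3,2)$ — in particular that $\gamma_{3,4}\preceq\gamma_{3,5}$ and that all seven $p$-rank-$0$ types lie below $\gamma_{3,4}$ (hence below $\gamma_{3,5}$) — so that the several closed loci of equal dimension are forced to coincide rather than merely to be nested. Everything else is formal manipulation with the four Newton polygons and the fact that both stratifications refine the $p$-rank stratification.
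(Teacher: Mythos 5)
Your computation of the $p$-rank of each Newton polygon and of each Ekedahl-Oort stratum, and the resulting identities \eqref{eq:p-rank 4}--\eqref{eq:p-rank 0}, follow the same route as the paper (the paper computes the Ekedahl-Oort $p$-ranks via Moonen's standard objects, or cites \cite[Proposition 5.6]{RNT}); that part is fine. The genuine gap is in the closure relations, specifically the Newton half of \eqref{eq:p-rank 0 closure}. You assert that $\overline{\mathcal{M}(3,2)^{\beta_1}} = \mathcal{M}(3,2)^{(f_A=0)}$ follows from ``the closure of a $p$-rank stratum is the union of all smaller ones,'' but $\mathcal{M}(3,2)^{\beta_1}$ is \emph{not} a $p$-rank stratum: it is a proper open subset of $\mathcal{M}(3,2)^{(f_A=0)}$, whose complement there is the supersingular locus. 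The claimed equality is exactly the statement that $\mathcal{M}(3,2)^{\ss} \subseteq \overline{\mathcal{M}(3,2)^{\beta_1}}$, i.e.\ that no irreducible component of $\mathcal{M}(3,2)^{(f_A=0)}$ lies entirely inside $\mathcal{M}(3,2)^{\ss}$, and nothing in your argument supplies this. The paper gets it from the purity of the $p$-rank stratification (every irreducible component of $\mathcal{M}(3,2)^{(f_A=0)}$ has dimension $4$) together with a dimension count inside that stratum; alternatively one could invoke the known closure relations for Newton strata of PEL Shimura varieties, but some such ingredient must be named.

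A secondary weakness is your treatment of the Ekedahl-Oort halves $\overline{\mathcal{M}(3,2)_{\gamma_{3,5}}}$ and $\overline{\mathcal{M}(3,2)_{\gamma_{3,4}}}$: you appeal to ``the closure order on $\mathbf{W}(3,2)$ recorded in \cite{RNT},'' asserting that every $p$-rank-$0$ type lies below $\gamma_{3,4}$ and every type of $p$-rank $\leq 2$ lies below $\gamma_{3,5}$. Those order relations are precisely equivalent to the equalities being proved, and they are not established in \cite{RNT} (the present paper only cites Wooding's thesis for one such closure fact, and only later). The paper instead avoids needing the full closure order: purity of $p$-rank strata shows every component of $\mathcal{M}(3,2)^{(f_A=2)}$ (resp.\ $\mathcal{M}(3,2)^{(f_A=0)}$) has dimension $5$ (resp.\ $4$), and $\gamma_{3,5}$ (resp.\ $\gamma_{3,4}$) indexes the unique Ekedahl-Oort stratum of that dimension inside the stratum, hence is dense in it; this yields both the Ekedahl-Oort and (with the observation above) the Newton halves of \eqref{eq:p-rank 2 closure}--\eqref{eq:p-rank 0 closure} simultaneously. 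If you can genuinely source the closure order, your route works for the Ekedahl-Oort halves, but as written it assumes the key point rather than proving it, and it still leaves the $\beta_1$ closure unproved.
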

\begin{proof}
Equations \eqref{eq:p-rank 4}, \eqref{eq:p-rank 2} and \eqref{eq:p-rank 0} follow immediately from computing the $p$-ranks of the Ekedahl-Oort strata, for instance by constructing the standard objects introduced in \cite[4.9]{Moonen2001}. Alternatively, Equation~\eqref{eq:p-rank 4} follows from \cite{MoonenST} and it follows from \cite[Proposition 5.6]{RNT} that the Ekedahl-Oort strata with positive $p$-rank are precisely the strata $\mathcal{M}(3,2)_{\gamma_{u,5}}$ with $u>1$. 

As for the closure relations, observe that the density of the $\mu$-ordinary locus directly implies Equation~\eqref{eq:p-rank 4 closure}. Furthermore, Equation~\eqref{eq:p-rank 2 closure} follows from the purity of $p$-rank strata and the fact that $\mathcal{M}(3,2)_{\gamma_{3,5}}$ is the only Ekedahl-Oort stratum in $\mathcal{M}(3,2)^{(f_A=2)}$ that has the same dimension as $\mathcal{M}(3,2)^{(f_A=2)}$. Finally, Equation~\eqref{eq:p-rank 0 closure} follows using the same argument.
\end{proof}

Lemma~\ref{l:p-rank} yields significant progress towards answering Question~\ref{q}. It gives a complete answer for all Ekedahl-Oort strata and Newton strata of positive $p$-rank, since $\mathcal{M}(3,2)^{(f_A=4)}$ and $\mathcal{M}(3,2)^{(f_A=2)}$ contain only one Newton stratum. We henceforth restrict our attention to Ekedahl-Oort strata and Newton strata of $p$-rank $0$.

\section{Arguments based on the Siegel modular variety} \label{s:SMV}

\subsection{The forgetful map} \label{ss:forget}

In this section, we make further progress towards answering Question~\ref{q} by exploiting the \textbf{forgetful map} from the unitary Shimura variety $\mathcal{M}(a,b)$ to the Siegel modular variety $\mathcal{A}_q$. The forgetful map is defined as
\begin{align*}
    \Psi_{a,b}: \mathcal{M}(a,b) &\to \mathcal{A}_q \\
    (A,\iota,\lambda,\xi) &\mapsto (A,\lambda,\xi),
\end{align*}

where $a+b=q$. The forgetful map $\Psi_{a,b}$ induces a map on Ekedahl-Oort strata. More precisely, let $\mathbf{W}_q$ be the set of permutations $\omega \in \mathfrak{S}_{2q}$ satisfying
\[\omega^{-1}(1) < \omega ^{-1}(2) < \cdots < \omega^{-1}(q) \quad \text{and} \quad \omega(i) + \omega(2q+1-i) = 2q+1.\]

By \cite[3.6]{Moonen2001},  the Ekedahl-Oort strata of $\mathcal{A}_q$ correspond bijectively to elements of $\mathbf{W}_q$. Thus $\Psi_{a,b}$ induces a map
$$\psi_{a,b}: \mathbf{W}(a,b) \to \mathbf{W}_q.$$ Given $\omega \in \mathbf{W}_q$, let $\mathcal{A}_{q,\omega}$ denote the corresponding Ekedahl-Oort stratum of $\mathcal{A}_q$.

In \cite[Theorem 5.2]{RNT}, the map $\psi_{q-2,2}$ is completely described for general $q\geq 2$. We can leverage these results by setting $q=5$. For $\gamma_{u,v} \in \mathbf{W}(a,b)$, let us denote $\omega_{u,v}:=\psi_{3,2}(\gamma_{u,v})\in \mathbf{W}_q$. We first consider the cases when $\mathcal{A}_{5,\omega_{u,v}}$ is contained in the supersingular locus of $\mathcal{A}_5$.

\begin{lemma} \label{l:Hoeve}
The Ekedahl-Oort strata $\mathcal{M}(3,2)_{\gamma_{1,2}}$ and $\mathcal{M}(3,2)_{\gamma_{1,3}}$ are contained in $\mathcal{M}(3,2)^{\ss}$.
\end{lemma}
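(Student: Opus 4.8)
The plan is to show that the Ekedahl-Oort strata $\mathcal{M}(3,2)_{\gamma_{1,2}}$ and $\mathcal{M}(3,2)_{\gamma_{1,3}}$ push forward, under the forgetful map $\Psi_{3,2}$, into the supersingular locus of $\mathcal{A}_5$, and then to conclude that they themselves are supersingular. First I would compute the images $\omega_{1,2} = \psi_{3,2}(\gamma_{1,2})$ and $\omega_{1,3} = \psi_{3,2}(\gamma_{1,3})$ explicitly in $\mathbf{W}_5 \subset \mathfrak{S}_{10}$, using the explicit description of $\psi_{q-2,2}$ from \cite[Theorem 5.2]{RNT} with $q = 5$. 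Having these two permutations in hand, I would identify the corresponding Ekedahl-Oort strata $\mathcal{A}_{5,\omega_{1,2}}$ and $\mathcal{A}_{5,\omega_{1,3}}$ of the Siegel modular variety and check that each is contained in the supersingular locus of $\mathcal{A}_5$. The key input here is van der Geer--Ekedahl--Oort theory for $\mathcal{A}_g$: an Ekedahl-Oort stratum of $\mathcal{A}_g$ is supersingular precisely when its associated final (elementary) sequence is small enough — concretely, for $g = 5$ the strata of dimension $\leq 3$, equivalently those whose length $\ell(\omega) \leq 3$, are exactly the supersingular ones (this is the dimension of the supersingular locus $\dim \mathcal{A}_5^{\ss} = \lfloor g^2/4 \rfloor = 6$... wait — more precisely one uses Harashita's and van der Geer's classification, or the fact that these particular final sequences appear in the known list of supersingular EO types in $\mathcal{A}_5$). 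Alternatively, and perhaps more cleanly, I would note that $\ell(\gamma_{1,2}) = 0$ and $\ell(\gamma_{1,3}) = 1$, so $\dim \mathcal{M}(3,2)_{\gamma_{1,2}} = 0$ and $\dim \mathcal{M}(3,2)_{\gamma_{1,3}} = 1$; by Lemma~\ref{l:p-rank} both strata have $p$-rank $0$, hence lie in $\mathcal{M}(3,2)^{\beta_1} \cup \mathcal{M}(3,2)^{\ss}$, and since the $\beta_1$-stratum $\mathcal{M}(3,2)^{\beta_1} = \mathcal{M}(3,2)_{\gamma_{3,4}}$ has larger dimension and a nonsupersingular open piece, one shows these low-dimensional strata cannot meet it.

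The cleanest route, which I would pursue as the main argument, is via the forgetful map together with the observation that the supersingular locus is closed and the Newton stratification on $\mathcal{A}_5$ pulls back compatibly. Concretely: a point $(A,\iota,\lambda,\xi)$ of $\mathcal{M}(3,2)$ is supersingular if and only if its image $(A,\lambda,\xi)$ is supersingular in $\mathcal{A}_5$ (being supersingular is a property of $A[p^\infty]$ alone, and is insensitive to the $\oo_K$-action). Hence $\Psi_{3,2}^{-1}(\mathcal{A}_5^{\ss}) = \mathcal{M}(3,2)^{\ss}$. So it suffices to prove $\Psi_{3,2}(\mathcal{M}(3,2)_{\gamma_{1,2}})$ and $\Psi_{3,2}(\mathcal{M}(3,2)_{\gamma_{1,3}})$ land in $\mathcal{A}_5^{\ss}$, i.e. that $\mathcal{A}_{5,\omega_{1,2}}$ and $\mathcal{A}_{5,\omega_{1,3}}$ are supersingular EO strata of $\mathcal{A}_5$ — here one uses that the EO stratum $\mathcal{A}_{5,\omega}$ containing the image of an EO stratum of $\mathcal{M}(3,2)$ is the one indexed by $\omega = \psi_{3,2}(\gamma_{u,v})$, and that $\mathcal{A}_{5,\omega} \subseteq \mathcal{A}_5^{\ss}$ can be read off from the generic Newton polygon of $\mathcal{A}_{5,\omega}$, computed from the final sequence of $\omega$ by the algorithm of Oort (or from the tables of van der Geer / Harashita for $g=5$). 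I expect $\omega_{1,2}$ to be the final sequence of the superspecial point and $\omega_{1,3}$ to index a $1$-dimensional supersingular EO stratum, both of which are well known to be supersingular.

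The main obstacle is the bookkeeping: extracting $\omega_{1,2}$ and $\omega_{1,3}$ correctly from \cite[Theorem 5.2]{RNT} (the formula there is stated for general $q$ and must be specialized carefully to $q = 5$, $b = 2$), and then reliably certifying that the resulting two EO strata of $\mathcal{A}_5$ are supersingular. For the latter, the safest argument is a dimension count combined with the structure of $\mathcal{A}_5^{\ss}$: since $\dim \mathcal{A}_5^{\ss} = 6$ and the non-supersingular $p$-rank-$0$ Newton strata of $\mathcal{A}_5$ all have dimension strictly larger than the dimensions $\ell(\omega_{1,2}) = 0$ and $\ell(\omega_{1,3}) = 1$ of our EO strata — and, crucially, the \emph{only} EO strata of those small dimensions in the $p$-rank-$0$ locus are the supersingular ones (the minimal EO stratum of $\mathcal{A}_5$ is the superspecial locus, and the next one up is $1$-dimensional and supersingular by Oort's classification of minimal EO strata, cf. \cite{MoonenST}) — we conclude. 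Finally, pulling back along $\Psi_{3,2}$ and using $\Psi_{3,2}^{-1}(\mathcal{A}_5^{\ss}) = \mathcal{M}(3,2)^{\ss}$ gives $\mathcal{M}(3,2)_{\gamma_{1,2}}, \mathcal{M}(3,2)_{\gamma_{1,3}} \subseteq \mathcal{M}(3,2)^{\ss}$, as desired.
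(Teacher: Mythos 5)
Your overall skeleton is the same as the paper's: compute $\omega_{1,2}=\psi_{3,2}(\gamma_{1,2})$ and $\omega_{1,3}=\psi_{3,2}(\gamma_{1,3})$ from \cite[Theorem 5.2]{RNT}, show the Siegel Ekedahl--Oort strata $\mathcal{A}_{5,\omega_{1,2}}$ and $\mathcal{A}_{5,\omega_{1,3}}$ lie in the supersingular locus of $\mathcal{A}_5$, and pull back along $\Psi_{3,2}$ (your observation that $\Psi_{3,2}^{-1}(\mathcal{A}_5^{\ss})=\mathcal{M}(3,2)^{\ss}$ is correct and is exactly the paper's last step). The genuine gap is the certification step in $\mathcal{A}_5$, and the substitutes you offer for it do not work. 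You repeatedly use $\ell(\gamma_{1,2})=0$ and $\ell(\gamma_{1,3})=1$ as if they were the dimensions of $\mathcal{A}_{5,\omega_{1,2}}$ and $\mathcal{A}_{5,\omega_{1,3}}$; but the forgetful map sends a small unitary EO stratum into a typically much larger Siegel EO stratum. For instance, $\omega_{2,3}$ has final sequence $[0,1,1,1,2,2,3,4,4,5]$, so $\mathcal{A}_{5,\omega_{2,3}}$ has dimension $0+1+1+1+2=5$, while $\mathcal{M}(3,2)_{\gamma_{2,3}}$ has dimension $2$. In particular your expectation that $\omega_{1,2}$ is the superspecial type and $\omega_{1,3}$ indexes a one-dimensional supersingular stratum is unsupported (the paper only establishes, and only needs, $\omega_{1,2}(3)=\omega_{1,3}(3)=3$, i.e.\ $\phi(3)=0$, which is far weaker than the superspecial sequence). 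Moreover, even with the correct dimensions, comparing the dimension of an EO stratum with the dimensions of the non-supersingular Newton strata cannot by itself give containment: if some irreducible component of the EO stratum had a non-supersingular generic Newton polygon, then by Grothendieck specialization the intersection with that Newton stratum would be open and dense in the component, so no dimensional contradiction arises. A containment criterion is genuinely needed.

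The paper supplies exactly such a criterion: by the explicit description of $\psi_{3,2}$ one gets $\omega_{1,2}(3)=\omega_{1,3}(3)=3$, and then Hoeve's criterion \cite{Hoeve2009} (as packaged in \cite[Corollary 5.4]{RNT}) shows $\mathcal{A}_{5,\omega_{1,2}},\mathcal{A}_{5,\omega_{1,3}}\subseteq \mathcal{A}_5^{\ss}$. Your fallback suggestion --- compute the generic Newton polygon of each final sequence via Oort/Harashita and check it is supersingular --- would be a legitimate completion if actually carried out (this is in effect what the paper does in Section 4.3 via \cite{Harashita_slope} for $\gamma_{1,4},\gamma_{1,5},\gamma_{2,3}$, where the corresponding Siegel strata are \emph{not} contained in $\mathcal{A}_5^{\ss}$ and one must additionally use that the intermediate Siegel polygons do not occur on $\mathcal{M}(3,2)$; see Remark~\ref{r: Hoeve vs Harashita}), but as written you compute nothing, and the heuristics you lean on instead (``dimension $\leq 3$ implies supersingular'', ``these are the minimal EO strata'') are false or unjustified. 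A minor side error: $\mathcal{M}(3,2)^{\beta_1}$ is not equal to $\mathcal{M}(3,2)_{\gamma_{3,4}}$ (it contains $\mathcal{M}(3,2)_{\gamma_{2,4}}$, and $\mathcal{M}(3,2)_{\gamma_{3,4}}$ also meets the supersingular locus), though this is not load-bearing for your argument.
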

\begin{proof}
This is the case $q=5$ of \cite[Corollary 5.4]{RNT}. The explicit description of $\psi_{3,2}$ is used to show that $\omega_{1,2}(3) = \omega_{1,3}(3)=3$, so $\mathcal{A}_{5,\omega_{1,2}}$ and $\mathcal{A}_{5,\omega_{1,3}}$ are contained in the supersingular locus of $\mathcal{A}_5$ by \cite{Hoeve2009}. This implies that $\mathcal{M}(3,2)_{\gamma_{1,2}}$ and $\mathcal{M}(3,2)_{\gamma_{1,3}}$ are contained in $\mathcal{M}(3,2)^{ss}$. 
\end{proof}

\subsection{Minimal Ekedahl-Oort strata} \label{ss:minimal}

An Ekedahl-Oort stratum of a PEL type Shimura variety is called \textbf{minimal} if all the parameterized abelian varieties in the Ekedahl-Oort stratum have isomorphic $p$-divisible groups. A minimal Ekedahl-Oort stratum is completely contained in one Newton stratum. While the existence and uniqueness of minimal Ekedahl-Oort strata in Newton strata of unitary Shimura varieties is in general unknown, every Newton stratum of $\mathcal{A}_q$ contains a unique minimal Ekedahl-Oort stratum \cite{Oort05minimal,Oort05simple}. The Dieudonn\'{e} modules of these minimal Ekedahl-Oort strata are constructed explicitly in \cite[5.3]{DeJongOort}. 

\begin{lemma} \label{l:gamma24}
The Ekedahl-Oort stratum $\mathcal{A}_{5,\omega_{2,4}}$ is the minimal Ekedahl-Oort stratum of $\mathcal{A}_5$ with slope sequence $\beta_1 = \left[ \frac{1}{4}, \frac{1}{2} , \frac{3}{4} \right]$. Thus, the Ekedahl-Oort stratum $\mathcal{M}(3,2)_{\gamma_{2,4}}$ is completely contained in $\mathcal{M}(3,2)^{\beta_1}$.
\end{lemma}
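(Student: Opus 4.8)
The lemma has two parts. The first is an identification of a specific Ekedahl-Oort stratum of $\mathcal{A}_5$ with "the minimal Ekedahl-Oort stratum with slope sequence $\beta_1 = [1/4, 1/2, 3/4]$." The second part (that $\mathcal{M}(3,2)_{\gamma_{2,4}}$ is contained in $\mathcal{M}(3,2)^{\beta_1}$) follows from the first via the forgetful map's compatibility with Newton stratification and the fact that minimal EO strata of $\mathcal{A}_5$ lie in a single Newton stratum. So the content is in the first part.

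**The plan for the first part.** I would first compute $\omega_{2,4} = \psi_{3,2}(\gamma_{2,4})$ explicitly as an element of $\mathbf{W}_5 \subset \mathfrak{S}_{10}$, using the explicit description of $\psi_{q-2,2}$ from [RNT, Theorem 5.2] with $q=5$. Then, on the other side, I would write down the combinatorial invariant attached to the unique minimal Ekedahl-Oort stratum in the Newton stratum of $\mathcal{A}_5$ with slopes $[1/4, 1/2, 3/4]$. This Newton polygon corresponds to the $p$-divisible group $G_{1,3} \oplus G_{1,1} \oplus G_{3,1}$ (height $10$, dimension $5$). By [Oort05minimal, Oort05simple], there is a unique minimal EO stratum in this Newton stratum, and its Dieudonné module (equivalently, its final sequence / elementary sequence, or the corresponding element of $\mathbf{W}_5$) is constructed explicitly in [DeJongOort, 5.3]. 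I would compute that canonical word/final sequence from the superspecial data of $G_{1,3} \oplus G_{1,1} \oplus G_{3,1}$ and check it matches $\omega_{2,4}$.

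**Carrying it out concretely.** The cleanest route is via final sequences (elementary sequences). An EO stratum of $\mathcal{A}_5$ is recorded by a final sequence $(\nu(1), \dots, \nu(10))$, and the minimal one attached to a Newton polygon is read off from the $a$-number-one "fundamental" Dieudonné module built from the slopes as in [DeJongOort, 5.3]: one arranges the chain of $G_{m,n}$'s and writes down the resulting $V$- and $F$-action on a standard basis, from which the final sequence is immediate. For $[1/4, 1/2, 3/4]$ I expect the final sequence of the minimal stratum to be the one of the EO stratum $\mathcal{A}_{5,\omega_{2,4}}$; matching it is then a finite check. Equivalently, one can verify the match at the level of the $p$-divisible group directly: show that the $p$-torsion $M_{\omega_{2,4}}$ (the standard Dieudonné module of the $\gamma_{2,4}$ stratum, pushed forward by $\Psi_{3,2}$) has $a$-number $1$ and that its associated Newton polygon is $[1/4,1/2,3/4]$ — since a minimal EO stratum is characterized as the unique EO stratum in its Newton stratum with constant (hence generic-in-the-stratum) $p$-divisible group, and EO strata with $a = 1$ in a fixed Newton stratum are forced to be the minimal one when the dimensions line up.

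**Main obstacle.** The main work is bookkeeping: correctly applying the formula for $\psi_{3,2}$ from [RNT] to get $\omega_{2,4}$, and correctly extracting the final sequence of the [DeJongOort, 5.3] minimal Dieudonné module for the polygon $[1/4,1/2,3/4]$ — the indexing conventions (elementary vs. final sequences, the direction in which $F$ and $V$ act, how $\mathbf{W}_5$ embeds in $\mathfrak{S}_{10}$) are the easiest place to slip. I expect no conceptual difficulty beyond that: once both sides are computed in the same convention, the identification is a direct comparison, and the containment $\mathcal{M}(3,2)_{\gamma_{2,4}} \subseteq \mathcal{M}(3,2)^{\beta_1}$ then follows formally since $\Psi_{3,2}$ is compatible with Newton stratifications and a minimal EO stratum of $\mathcal{A}_5$ meets only one Newton stratum.
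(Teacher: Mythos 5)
Your proposal matches the paper's proof in essence: both compute $\omega_{2,4}=\psi_{3,2}(\gamma_{2,4})$ via \cite[Theorem 5.2]{RNT}, build the minimal Dieudonn\'e module $M_{1,3}\oplus M_{1,1}\oplus M_{3,1}$ for the slope sequence $\left[\frac{1}{4},\frac{1}{2},\frac{3}{4}\right]$ from \cite[5.3]{DeJongOort}, match the two via Moonen's classification, and then deduce the containment of $\mathcal{M}(3,2)_{\gamma_{2,4}}$ in $\mathcal{M}(3,2)^{\beta_1}$ from the compatibility of the forgetful map with the Newton stratification. The only difference is cosmetic: the paper packages the comparison as an application of \cite[Proposition 5.11]{RNT} with $(n_1,n_2,n_3)=(1,1,3)$, while you propose to carry out the finite combinatorial check directly.
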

\begin{proof}
This is an application of \cite[Proposition 5.11]{RNT} with $(n_1,n_2,n_3)=(1,1,3)$. First, \cite[Theorem 5.2]{RNT} gives
$$ \omega_{2,4} =\psi_{3,2}(\gamma_{2,4}) = (2,6,8,4) (3,7,9,5).$$
Additionally, we construct the minimal Ekedahl-Oort stratum corresponding to the slope sequence $\alpha= \left[\frac{1}{4}, \frac{1}{2}, \frac{3}{4}  \right]$. In the notation of \cite[5.3]{DeJongOort}, its mod-$p$ Dieudonn\'{e} module is given by 
$$M_\alpha=M_{1,3} \oplus M_{1,1} \oplus M_{3,1}.$$
Finally, one verifies via \cite[3.6]{Moonen2001} that $M_\alpha$ corresponds to the permutation $\omega_{2,4}$. 

Thus we have proved that $\mathcal{A}_{5,\omega_{2,4}}$ is the minimal Ekedahl-Oort stratum of the Newton stratum $\mathcal{A}_5^{\beta_1}$. This implies that $\mathcal{M}(3,2)_{\gamma_{2,4}}$ must be contained in $\mathcal{M}(3,2)^{\beta_1}$.
\end{proof}

\subsection{Generic slopes} \label{ss:generic}

In this section, we use results of \cite{Harashita_slope, Harashita_polygon} to compute slopes of generic Newton polygons of strata in the Siegel modular variety $\mathcal A_q$ to obtain results for Ekedahl-Oort strata for the unitary Shimura variety $\mathcal M(q-b,b)$. For our purposes, it suffices to focus on the techniques in \cite{Harashita_slope} for computing the \emph{first slope} of the generic Newton polygon for a given stratum, since it follows from \cite[Theorem 4.1]{Harashita_slope} that all other polygons for the Ekedahl-Oort stratum must have first slope greater than or equal to this value. We will use this fact to shed light on some interactions between the Ekedahl-Oort and Newton stratifications in signature $(3,2)$.

\begin{definition}
A \textbf{final sequence} is a function $\phi: \{0, 1, \ldots ,2q \} \to \{0, 1, \ldots , q\}$, satisfying the following conditions:
\begin{itemize}
    \item $\phi(0)=0$;
    \item $\phi(i-1) \leq \phi(i) \leq \phi(i-1)+1$ for $1\leq i \leq 2q$;
    \item $\phi(2q-i) = q-i+\phi(i)$ for $0 \leq i \leq 2q$.
\end{itemize}
In the language of \cite[Definition 2.2]{Harashita_slope}, this is a final sequence \emph{stretched from an elementary sequence}. We will write a final sequence down as an ordered tuple $\phi=[\phi(1), \ldots ,\phi(2q)]$. 
\end{definition}
Given an element $\omega \in \mathfrak{S}_{2q}$, we can construct its corresponding final sequence $\phi$ as follows:
\begin{itemize}
    \item Set $\phi(0)=0$;
    \item For $1\leq i \leq 2q$, if $\omega(i)>q$, set $\phi(i)=\phi(i-1)+1$;
    \item For $1\leq i \leq 2q$, if $\omega(i)\leq q$, set $\phi(i)=\phi(i-1)$.
\end{itemize} 
Given $\gamma_{u,v}\in \mathbf{W}(3,2)$, let $\phi_{u,v}$ denote the final sequence corresponding to $\omega_{u,v}=\psi_{3,2}(\gamma_{u,v})$.     

We now demonstrate how to compute the first slope of the Newton polygon of a generic point in a given Ekedahl-Oort stratum from the above construction. Note that, for ease of notation, our exposition differs slightly from that of \cite{Harashita_slope}. We first define a map on the set $S=\{1,\ldots,2q\}$ based on the image of the final sequence $\phi$. Let $\tilde\phi$ be the map defined by
\begin{equation}\label{eqn:Harashita_tildephi}
    \tilde\phi(i)=\begin{cases}
    \phi(i) & \text{ if } \phi(i)\not=0,\\
    q+i & \text{ otherwise.}
\end{cases}
\end{equation}

\begin{definition}[{\citenum{Harashita_slope}, Definition~3.1}]\label{def:Harashita_slope}
    Let $\mathcal D =\cap_{j=1}^\infty \tilde\phi^j(S)$ and let $\mathcal C=\mathcal D \cap \{q+1,\ldots, 2q\}$. The \emph{generic first slope} associated with $\phi$ is 
    $$\lambda_\phi = \frac{\#\mathcal C}{\#\mathcal D}.$$
\end{definition}

With this setup, we are able to prove the following results.

\begin{lemma} \label{l:gamma14}
The generic first slope of the Ekedahl-Oort stratum $\mathcal{A}_{q,\omega_{1,4}}$ is $\frac{2}{5}$.
\end{lemma}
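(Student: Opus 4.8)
The plan is to compute $\lambda_{\phi_{1,4}}$ directly from Definition~\ref{def:Harashita_slope}, which reduces to a finite, mechanical computation once we know the permutation $\omega_{1,4}=\psi_{3,2}(\gamma_{1,4})$ and its associated final sequence $\phi_{1,4}$. First I would invoke \cite[Theorem 5.2]{RNT} with $q=5$ to write down $\omega_{1,4}\in\mathbf{W}_5\subset\mathfrak{S}_{10}$ explicitly (this is the analogue of the computation $\omega_{2,4}=(2,6,8,4)(3,7,9,5)$ used in Lemma~\ref{l:gamma24}). From $\omega_{1,4}$ I would then read off the final sequence $\phi_{1,4}=[\phi(1),\ldots,\phi(10)]$ using the recipe just above: record a $+1$ step exactly at the positions $i$ where $\omega_{1,4}(i)>5$, and a flat step where $\omega_{1,4}(i)\le 5$. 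As a sanity check, $\phi_{1,4}(10)$ must equal $5$ and the symmetry $\phi(10-i)=5-i+\phi(i)$ must hold; also the number of flat steps in positions $1,\ldots,5$ records the relevant combinatorial data of $\gamma_{1,4}$.

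Next I would build the auxiliary map $\tilde\phi$ on $S=\{1,\ldots,10\}$ via \eqref{eqn:Harashita_tildephi}: $\tilde\phi(i)=\phi_{1,4}(i)$ when $\phi_{1,4}(i)\neq 0$, and $\tilde\phi(i)=5+i$ otherwise (the only values of $i$ with $\phi_{1,4}(i)=0$ are the initial segment before the first $+1$ step). Then I would iterate: compute $\tilde\phi(S)$, $\tilde\phi^2(S)$, and so on, until the images stabilize; since $S$ is finite, $\mathcal{D}=\bigcap_{j\ge 1}\tilde\phi^j(S)$ is reached after finitely many steps (in fact $\mathcal{D}$ is the eventual image, i.e.\ the union of the cycles of the eventual-periodic behavior of $\tilde\phi$). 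Having found $\mathcal{D}$, I set $\mathcal{C}=\mathcal{D}\cap\{6,7,8,9,10\}$ and read off $\lambda_{\phi_{1,4}}=\#\mathcal{C}/\#\mathcal{D}$, which the lemma asserts equals $\tfrac{2}{5}$; concretely this should come out to $\#\mathcal{D}=5$ and $\#\mathcal{C}=2$.

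There is no genuine obstacle here — the statement is a finite computation — so the "hard part" is merely bookkeeping: correctly transcribing $\omega_{1,4}$ from the general formula of \cite[Theorem 5.2]{RNT} at $q=5$, and then iterating $\tilde\phi$ without arithmetic slips. The one conceptual point worth stating carefully is why the procedure terminates and is well-defined: $\tilde\phi$ maps the finite set $S$ to itself, so the nested sequence of images $S\supseteq\tilde\phi(S)\supseteq\tilde\phi^2(S)\supseteq\cdots$ is eventually constant, and that eventual value is exactly $\mathcal{D}$. I would present the computation by displaying the tuple $\phi_{1,4}$, the table of values of $\tilde\phi$, and the stabilized set $\mathcal{D}$, then concluding $\lambda_{\phi_{1,4}}=\tfrac{2}{5}$. (The role this slope plays downstream — namely, that by \cite[Theorem 4.1]{Harashita_slope} every point of $\mathcal{M}(3,2)_{\gamma_{1,4}}$ has first slope $\ge\tfrac{2}{5}$, hence lies in $\mathcal{M}(3,2)^{\ss}$ — is a separate step and not needed for this lemma itself.)
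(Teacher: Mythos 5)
Your proposal is correct and follows essentially the same route as the paper: both pass through \cite[Theorem 5.2]{RNT} to get $\omega_{1,4}=(3,6,4)(5,7,8)$, form the final sequence $\phi_{1,4}=[0,0,1,1,2,2,3,3,4,5]$, and apply Harashita's first-slope machinery. The only difference is that the paper then simply cites \cite[Example 3.19.(3)]{Harashita_slope} for the value $\tfrac{2}{5}$, whereas you carry out the $\tilde\phi$-iteration of Definition~\ref{def:Harashita_slope} yourself, which indeed stabilizes at $\mathcal{D}=\{1,2,3,6,7\}$ and $\mathcal{C}=\{6,7\}$, confirming $\#\mathcal{C}/\#\mathcal{D}=\tfrac{2}{5}$.
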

\begin{proof}
First, \cite[Theorem 5.2]{RNT} gives $\omega_{1,4} = \psi_{3,2}(\gamma_{1,4})= (3,6,4) (5,7,8)$ and hence $$\phi_{1,4}= [0,0,1,1,2,2,3,3,4,5].$$ It is computed in \cite[Example 3.19.(3)]{Harashita_slope} that the generic first slope is $\frac{2}{5}$.
\end{proof}

\begin{lemma} \label{l:gamma15}
The generic first slope of the Ekedahl-Oort strata $\mathcal{A}_{q,\omega_{1,5}}$ and  $\mathcal{A}_{q,\omega_{2,3}}$ is $\frac{1}{3}$.

\end{lemma}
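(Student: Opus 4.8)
The plan is to follow the template of the proof of Lemma~\ref{l:gamma14}. First I would feed $q=5$ into \cite[Theorem 5.2]{RNT} to obtain explicit cycle expressions for $\omega_{1,5}=\psi_{3,2}(\gamma_{1,5})$ and $\omega_{2,3}=\psi_{3,2}(\gamma_{2,3})$ in $\mathfrak{S}_{10}$, just as $\omega_{1,4}=(3,6,4)(5,7,8)$ and $\omega_{2,4}=(2,6,8,4)(3,7,9,5)$ were obtained in Lemmas~\ref{l:gamma14} and \ref{l:gamma24}. From each $\omega$ I would record the set $\{\,i : \omega(i)>5\,\}$ and apply the recipe stated just before \eqref{eqn:Harashita_tildephi}: set $\phi(0)=0$, and put $\phi(i)=\phi(i-1)+1$ exactly when $\omega(i)>5$. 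This produces the two final sequences $\phi_{1,5}$ and $\phi_{2,3}$ as length-$10$ tuples.

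Next, for each final sequence $\phi$, I would run the computation in Definition~\ref{def:Harashita_slope}: form the self-map $\tilde\phi$ of $S=\{1,\dots,10\}$ via \eqref{eqn:Harashita_tildephi}, then compute the descending chain of images $\tilde\phi(S)\supseteq\tilde\phi^2(S)\supseteq\cdots$. As this is a nested family of finite sets it stabilizes after at most $\#S$ iterations, giving $\mathcal{D}$; then $\mathcal{C}=\mathcal{D}\cap\{6,\dots,10\}$ and $\lambda_\phi=\#\mathcal{C}/\#\mathcal{D}$. Carrying this out for both $\phi_{1,5}$ and $\phi_{2,3}$ should yield $\lambda_\phi=\tfrac13$ in each case, which is the assertion of the lemma. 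Where available I would instead cite the relevant case of \cite[Example 3.19]{Harashita_slope}, mirroring how Lemma~\ref{l:gamma14} cites \cite[Example 3.19.(3)]{Harashita_slope} for $\phi_{1,4}$.

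The argument is entirely computational and requires no new idea beyond Lemma~\ref{l:gamma14}, so I would not expect a serious obstacle. The only points demanding care are bookkeeping ones: reconciling the cycle notation of \cite{RNT}, our final-sequence convention, and Harashita's normalization in \cite{Harashita_slope} (our $\tilde\phi$ differs only cosmetically from the map there), and iterating $\tilde\phi$ correctly until the image stabilizes. It is also worth flagging that $\gamma_{1,5}$ and $\gamma_{2,3}$ index Ekedahl-Oort strata of $\mathcal{M}(3,2)$ of different dimensions ($3$ and $2$), so one should expect $\phi_{1,5}$ and $\phi_{2,3}$ to be distinct final sequences; if so, the two generic first slopes must be computed independently, and their common value $\tfrac13$ is genuinely an output of the computation rather than something evident a priori.
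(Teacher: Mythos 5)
Your proposal is correct and follows essentially the same route as the paper: extract $\omega_{1,5}=\psi_{3,2}(\gamma_{1,5})$ and $\omega_{2,3}=\psi_{3,2}(\gamma_{2,3})$ from \cite[Theorem 5.2]{RNT}, form the associated final sequences, and iterate $\tilde\phi$ as in Definition~\ref{def:Harashita_slope} to get $\mathcal{D}=\{1,2,6\}$, $\mathcal{C}=\{6\}$, hence generic first slope $\tfrac{1}{3}$. The only wrinkle is your closing expectation: in fact $\omega_{1,5}=\omega_{2,3}=(2,6,4,3)(5,7,8,9)$, so $\phi_{1,5}=\phi_{2,3}=[0,1,1,1,2,2,3,4,4,5]$ and a single computation suffices --- the two strata of $\mathcal{M}(3,2)$ have different dimensions but map to the same Ekedahl-Oort stratum of $\mathcal{A}_5$ (same polarized $p$-torsion group scheme, different $\ff_{p^2}$-action), which does not affect the validity of your argument.
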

\begin{proof}
From \cite[Theorem 5.2]{RNT} we have that $\omega_{1,5} = \omega_{2,3} = (2,6,4,3) (5,7,8,9)$ and hence 
$$\phi_{1,5}=\phi_{2,3} = [0,1,1,1,2,2,3,4,4,5].$$
Following Equation \eqref{eqn:Harashita_tildephi} and Definition \ref{def:Harashita_slope}, we compute $\mathcal{D}=\{1, 2, 6\}$ and $\mathcal{C}=\{6\}$. Therefore the generic first slope is $\frac{\# \mathcal{C}} {\# \mathcal{D}} = \frac{1}{3}$.
\end{proof}

\begin{remark}
    An abstract way to view the phenomenon $\omega_{1,5}=\omega_{2,3}$ is that the Ekedahl-Oort strata $\mathcal{M}(3,2)_{\gamma_{1,5}}$ and $\mathcal{M}(3,2)_{\gamma_{2,3}}$ have the same underlying $p$-torsion group scheme with polarization, but a different action of $\ff_{p^2}$.
\end{remark}

\begin{corollary} \label{c:generic}
The Ekedahl-Oort strata $\mathcal{M}(3,2)_{\gamma_{1,4}}$, $\mathcal{M}(3,2)_{\gamma_{1,5}}$ and $\mathcal{M}(3,2)_{\gamma_{2,3}}$ are contained in $\mathcal{M}(3,2)^{\ss}$.
\end{corollary}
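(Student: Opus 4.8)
The plan is to combine the generic-first-slope computations of Lemmas~\ref{l:gamma14} and~\ref{l:gamma15} with Harashita's monotonicity statement and the restricted list of Newton polygons available on $\mathcal{M}(3,2)$. The key input is that, by \cite[Theorem 4.1]{Harashita_slope}, every point of the Ekedahl-Oort stratum $\mathcal{A}_{q,\omega}$ has a Newton polygon whose first slope is at least the generic first slope $\lambda_{\phi}$; pulling back along the forgetful map $\Psi_{3,2}$, the same bound holds for every point of the corresponding stratum $\mathcal{M}(3,2)_{\gamma_{u,v}}$, since $\Psi_{3,2}$ sends $\mathcal{M}(3,2)_{\gamma_{u,v}}$ into $\mathcal{A}_{5,\omega_{u,v}}$ and preserves the $p$-divisible group up to the forgotten unitary structure.

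First I would recall from Section~\ref{sec:background} that the only Newton polygons occurring on $\mathcal{M}(3,2)$ with $p$-rank $0$ are $\beta_1 = \left[\frac{1}{4},\frac{1}{2},\frac{3}{4}\right]$, with first slope $\frac{1}{4}$, and $\beta_{\ss} = \left[\frac{1}{2}^5\right]$, with first slope $\frac{1}{2}$; by Lemma~\ref{l:p-rank} all three strata $\mathcal{M}(3,2)_{\gamma_{1,4}}$, $\mathcal{M}(3,2)_{\gamma_{1,5}}$, $\mathcal{M}(3,2)_{\gamma_{2,3}}$ have $p$-rank $0$, so each point in them has Newton polygon $\beta_1$ or $\beta_{\ss}$. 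Next I would invoke Lemma~\ref{l:gamma14} (generic first slope $\frac{2}{5}$ for $\mathcal{A}_{q,\omega_{1,4}}$) and Lemma~\ref{l:gamma15} (generic first slope $\frac{1}{3}$ for $\mathcal{A}_{q,\omega_{1,5}} = \mathcal{A}_{q,\omega_{2,3}}$) together with the monotonicity bound: every point of these strata has a Newton polygon with first slope $\geq \frac{2}{5}$ (resp. $\geq \frac{1}{3}$). Since $\frac{1}{4} < \frac{1}{3} < \frac{2}{5}$, the polygon $\beta_1$ is excluded in all three cases, leaving only $\beta_{\ss}$; hence each of the three Ekedahl-Oort strata is contained in $\mathcal{M}(3,2)^{\ss}$.

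The only real subtlety — and the step I would be most careful about — is the compatibility of the first-slope bound with the forgetful map: one must check that the Newton polygon attached to a point of $\mathcal{M}(3,2)$ (as a multiset of slopes of $A[p^\infty]$ with the unitary structure) has the same set of slopes, in particular the same first slope, as the Newton polygon of its image in $\mathcal{A}_5$ (the slopes of $A[p^\infty]$ as a bare $p$-divisible group). This is immediate because the slopes of a $p$-divisible group are an isogeny invariant that does not depend on any extra endomorphism structure, so forgetting $\iota$ changes neither the isogeny class of $A[p^\infty]$ nor its slope sequence. With that observed, the argument is just the numerical comparison above, and no further computation is needed.
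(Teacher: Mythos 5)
Your proposal is correct and follows essentially the same route as the paper: the generic-first-slope values from Lemmas~\ref{l:gamma14} and~\ref{l:gamma15}, the monotonicity bound from \cite[Theorem 4.1]{Harashita_slope}, and the observation that among the Newton polygons occurring on $\mathcal{M}(3,2)$ only $\beta_{\ss}$ has first slope at least $\tfrac{1}{3}$. Your extra remarks (the $p$-rank $0$ reduction via Lemma~\ref{l:p-rank} and the compatibility of slopes with the forgetful map) are correct but not needed beyond what the paper's one-line comparison already gives.
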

\begin{proof}
It follows from Lemma~\ref{l:gamma14} that the first slope of any Newton polygon occurring on $\mathcal{M}(3,2)_{\gamma_{1,4}}$ is at least $\frac{2}{5}$. Similarly, Lemma~\ref{l:gamma15} posits that the first slope of any Newton polygon occurring on $\mathcal{M}(3,2)_{\gamma_{1,5}}$ or $\mathcal{M}(3,2)_{\gamma_{2,3}}$ is at least $\frac{1}{3}$. On the other hand, the only Newton polygon on $\mathcal{M}(3,2)$ whose first slope is at least $\tfrac{1}{3}$ is the supersingular Newton polygon $\beta_{\ss} = \left[ \tfrac{1}{2}^5  \right]$. Thus $\beta_{\ss}$ is the only Newton polygon that can occur on the Ekedahl-Oort strata in question. In other words, these Ekedahl-Oort strata are contained in the supersingular locus $\mathcal{M}(3,2)^{\ss}$.
\end{proof}

\begin{remark} \label{r: Hoeve vs Harashita}
It is insightful to compare Corollary~\ref{c:generic} to Lemma~\ref{l:Hoeve}. As opposed to $\mathcal{A}_{5,\omega_{1,2}}$ and $\mathcal{A}_{5,\omega_{1,3}}$, the Ekedahl-Oort strata $\mathcal{A}_{5,\omega_{1,4}}$, $\mathcal{A}_{5,\omega_{1,5}}$ and $\mathcal{A}_{5,\omega_{2,3}}$ are not contained in the supersingular locus of $\mathcal{A}_5$. However, the non-supersingular Newton polygons that occur on these Ekedahl-Oort strata of $\mathcal{A}_5$ are not Newton polygons that occur on $\mathcal{M}(3,2)$. In terms of our moduli interpretation, there are non-supersingular abelian varieties in these Ekedahl-Oort strata of $\mathcal{A}_5$, but these abelian varieties do not admit an action of $\mathcal{O}_K$ of signature $(3,2)$ with compatible polarization.
\end{remark}

\section{Explicit construction of a point} \label{s:construction}

Sections \ref{s:p-rank} and \ref{s:SMV} provide an answer to  Question~\ref{q} for all Ekedahl-Oort strata in $\mathcal{M}(3,2)$ except $\mathcal{M}(3,2)_{\gamma_{3,4}}$. It follows from Equation~\eqref{eq:p-rank 0 closure} that $\mathcal{M}(3,2)_{\gamma_{3,4}}$ intersects $\mathcal{M}(3,2)^{\beta_1}$ and is in fact dense in it. In this section, we show that $\mathcal{M}(3,2)_{\gamma_{3,4}}$ also intersects the supersingular locus, by explicitly constructing a point in the intersection $\mathcal{M}(3,2)_{\gamma_{3,4}} \cap \mathcal{M}(3,2)^{\ss}$. To do this, we first use $p$-adic  Dieudonn\'e theory to construct the $p$-divisible groups of these points; we then use the $p$-divisible groups and  Rapoport-Zink uniformization to construct points of $\mathcal{M}(3,2)$.

Let $\breve{\mathbx{Z}}_p = W(\kk)$ be the ring of Witt vectors of $\kk$, denote by $\mathrm{Frob} = W(\mathrm{Frob}_{\kk})$ the lift of $\mathrm{Frob}_{\kk}$ to $\breve{\mathbx{Z}}_p$, and let $\breve{\mathbx{Q}}_p = \breve{\mathbx{Z}}_p[\frac{1}{p}]$. Note that $\breve{\mathbx{Q}}_p$ is isomorphic to the completion of the maximal unramified extension of $\mathbx{Q}_p$, with ring of integers isomorphic to $\breve{\mathbx{Z}}_p$, and that the residue field of $\breve{\mathbx{Z}}_p$ is $\kk$. Let $\varphi_1$, $\varphi_2$ be the two embeddings of $\mathcal{O}_K$ into $\breve{\mathbx{Q}}_p$.

\begin{definition}\label{def:pdiv} For any scheme $S$ over $\kk$, a \emph{unitary $p$-divisible group of signature $(q-b,b)$} over $S$ is a triple $(X, \iota_X, \lambda_X)$, where

\begin{enumerate}
\item{$X$ is a $p$-divisible group over $S$ of height $2q$ and dimension $q$; }
\item{$\iota_X: \mathcal{O}_K \otimes_{\mathbx{Z}} \mathbx{Z}_p \rightarrow \mathrm{End}(X)$ is an action satisfying the signature $(q-b,b)$ condition
$$\mathrm{charpol}(\iota(a) \mid \mathrm{Lie}(X) ) = (T - \phi_1(a))^{q-b}(T-\phi_2(a))^b \in \breve{\mathbx{Z}}_p[T],$$
for all $a \in \mathcal{O}_K$;
}
\item{$\lambda_X: X \rightarrow X^\vee$ is a $p$-principal polarization, meeting the following $\mathcal{O}_K$-linearity condition
$$\lambda_X\circ \iota_X(a) = \iota_X(\overline{a})^\vee \circ \lambda_X,$$
for all $a \in \mathcal{O}_K$.
}
\end{enumerate}
\end{definition}

Over an algebraically closed field, we may study unitary $p$-divisible groups linear-algebraically.
 
\begin{definition}\label{def:pDD} A \emph{unitary $p$-adic Dieudonn\'e module of signature $(q-b,b)$} over $\kk$ 
is a tuple $(M, M = M_1 \oplus M_2, F, V, \langle \cdot , \cdot \rangle )$, where

\begin{enumerate}
\item{$M$ is a free $\breve{\mathbx{Z}}_p$-module of rank $2q$};\label{def:pDD1}
\item{$M = M_1 \oplus M_2$ is a decomposition into rank-$q$ summands};\label{def:pDD2}
\item{$F: M \rightarrow M$ is a $\mathrm{Frob}$-semilinear operator, $V: M \rightarrow M$ is a $\mathrm{Frob}^{-1}$-semilinear operator, with $F \circ V = V \circ F = p$};\label{def:pDD3}
\item{$\langle \cdot , \cdot \rangle$ is a perfect alternating $\breve{\mathbx{Z}}_p$-bilinear pairing on M such that
$\langle Fx, y \rangle = \langle x, Vy \rangle^\mathrm{Frob},$
for all $x,y \in M$};\label{def:pDD4}
\item{$\mathrm{dim}_{\kk}(M_1/F M_2) = q-b$ and $\mathrm{dim}_{\kk}(M_2/F M_1) = b$; }\label{def:pDD5}
\item {$F$ and $V$ are homogeneous of degree 1 with respect to the decomposition $M = M_1 \oplus M_2$;} \label{def:pDD6}
\item {$M_1$ and $M_2$ are each totally isotropic  with respect to $\langle \cdot , \cdot \rangle$. }\label{def:pDD7}
\end{enumerate}
\end{definition}

For the rest of this section, we consider signature $(3,2)$ and $q=5$. By contravariant Dieudonné theory, there is an anti-equivalence
of categories between unitary $p$-divisible groups and unitary $p$-adic Dieudonné modules, both of signature $(3,2)$ over $\kk$. When no confusion arises, we abbreviate unitary $p$-divisible groups as $X$ and unitary $p$-adic Dieudonné modules as $M$. 

\begin{lemma}\label{l:explicit} Let $G_{\gamma_{3,4}}$ be the $p$-torsion group scheme occurring as the $p$-torsion subgroup for points in the Ekedahl-Oort stratum $\mathcal{M}(3,2)_{\gamma_{3,4}}$. There exists a supersingular unitary $p$-divisible group $X_{3,4}$ of signature $(3,2)$ over $\kk$ such that $X_{3,4}[p] \cong G_{\gamma_{3,4}}$, respecting action and polarization. 
\end{lemma}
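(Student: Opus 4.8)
The plan is to construct $X_{3,4}$ explicitly at the level of unitary $p$-adic Dieudonné modules (Definition~\ref{def:pDD}) and verify two things: (i) the $p$-torsion $X_{3,4}[p]$, i.e. the mod-$p$ reduction $M/pM$ with its induced $F$, $V$, pairing and $\mathcal{O}_K$-action, is isomorphic to $G_{\gamma_{3,4}}$; and (ii) the $p$-divisible group $X_{3,4}$ is supersingular, i.e. all slopes of $(M[\tfrac1p], F)$ equal $\tfrac12$. First I would record the mod-$p$ Dieudonné module $\overline M_{\gamma_{3,4}}$ of the Ekedahl-Oort stratum $\mathcal{M}(3,2)_{\gamma_{3,4}}$ concretely: using Moonen's standard objects \cite[4.9]{Moonen2001} attached to the permutation $\gamma_{3,4}\in\mathfrak S_5$ (with $\ell(\gamma_{3,4})=4$, $p$-rank $0$), write down a $\kk$-basis $e_1,\dots,e_5,f_1,\dots,f_5$ of $\overline M$, with $e_i\in \overline M_1$, $f_i\in \overline M_2$, together with the action of $F$ and $V$ on basis vectors, the $\ff_{p^2}$-grading, and the symplectic form pairing $\overline M_1$ with $\overline M_2$. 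This is the target $p$-torsion group scheme $G_{\gamma_{3,4}}$.

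Next I would lift this to a free $\breve{\mathbx Z}_p$-module $M = M_1\oplus M_2$ of rank $10$ with the same basis, and define $F$ on the lifted basis by the ``obvious'' lift of the mod-$p$ formulas, inserting factors of $p$ wherever the mod-$p$ operator killed a basis vector, so as to enforce $FV=VF=p$; then set $V := pF^{-1}$ on $M[\tfrac1p]$ and check $V$ preserves $M$. One defines the pairing $\langle\cdot,\cdot\rangle$ on $M$ by the evident lift making $M_1,M_2$ totally isotropic and dual to each other, and checks the compatibility $\langle Fx,y\rangle=\langle x,Vy\rangle^{\mathrm{Frob}}$, the signature condition $\dim M_1/FM_2=3$, $\dim M_2/FM_1=2$, and the homogeneity and $\mathcal{O}_K$-linearity axioms of Definition~\ref{def:pDD}. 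By construction $M/pM \cong \overline M_{\gamma_{3,4}}$ with all structures, so (i) holds; by contravariant Dieudonné theory $M$ corresponds to a unitary $p$-divisible group $X_{3,4}$ of signature $(3,2)$ with $X_{3,4}[p]\cong G_{\gamma_{3,4}}$.

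For the supersingularity (ii), I would compute the slopes of $(M[\tfrac1p],F)$ directly --- e.g. by exhibiting, after a base change, an $F$-stable $\breve{\mathbx Z}_p$-lattice on which some power of $F$ acts as a scalar $p^{k}$ times a bijective operator with all slopes $\tfrac12$, or equivalently by computing the Newton polygon of the matrix of $F$ and seeing that it is the straight line of slope $\tfrac12$. Since the $p$-rank is $0$ and $\mathcal{M}(3,2)$ only has the four Newton polygons $\beta_{\ss},\beta_1,\beta_2,\beta_{\mathrm{max}}$, it suffices to rule out $\beta_1=[\tfrac14,\tfrac12,\tfrac34]$, i.e. to check that $F$ has no slope $<\tfrac12$ segment; a clean way is to verify that $F^2 = p\cdot u$ for a unit $u$ on $M$ (or on a bounded-index sublattice), which forces all slopes to be $\tfrac12$.

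The main obstacle I anticipate is the bookkeeping in the second step: choosing the lift of $F$ so that \emph{simultaneously} the mod-$p$ reduction is exactly $G_{\gamma_{3,4}}$ (not merely isomorphic to it after an unwanted twist of the $\ff_{p^2}$-action or polarization), $V=pF^{-1}$ is integral, the alternating pairing is perfect and satisfies the adjunction identity, and the slopes come out to $\tfrac12$ rather than to $\beta_1$. Concretely, the delicate point is that a naive lift of a length-$4$, $p$-rank-$0$ Ekedahl-Oort module tends to land in the $\beta_1$-stratum (indeed $\mathcal{M}(3,2)_{\gamma_{3,4}}$ is dense in $\mathcal{M}(3,2)^{\beta_1}$ by Equation~\eqref{eq:p-rank 0 closure}), so one must perturb the lift by a carefully chosen ``supersingular'' correction term --- adding suitable $p$-multiples of basis vectors to the images $Fe_i, Ff_i$ --- to pull the Newton polygon down to the straight line while leaving the mod-$p$ structure untouched; verifying that such a correction exists and preserves all the axioms is the crux of the argument.
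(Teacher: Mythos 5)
Your overall strategy --- write down an explicit unitary $p$-adic Dieudonn\'e module $M$ over $\breve{\mathbx{Z}}_p$ whose reduction mod $p$ realizes the Ekedahl--Oort class $\gamma_{3,4}$ and whose isocrystal is isoclinic of slope $\tfrac12$, then invoke contravariant Dieudonn\'e theory --- is exactly the paper's. The problem is that your proposal stops precisely where the content of the lemma lies. You yourself note that the na\"ive lift of the mod-$p$ standard object tends to land in $\mathcal{M}(3,2)^{\beta_1}$, and that one must therefore choose the $p$-multiples inserted into the images $Fe_i$, $Ff_i$ (and their signs and targets) so that \emph{simultaneously} the reduction stays in the class $\gamma_{3,4}$, $V=pF^{-1}$ is integral, the pairing is perfect alternating with $\langle Fx,y\rangle=\langle x,Vy\rangle^{\mathrm{Frob}}$, the signature condition holds, and all slopes become $\tfrac12$; you then say that verifying such a choice exists ``is the crux'' and leave it there. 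Since the lemma is exactly the assertion that such a lift exists, this is a genuine gap: no candidate $F$ and $V$ are produced, and neither the supersingularity nor the Ekedahl--Oort class of the reduction is actually verified. The paper does this concretely: it exhibits $F$ and $V$ on a basis (Table~\ref{Table34}) with pairing $\langle e_i,f_j\rangle=(-1)^{i-1}\delta_{ij}$, checks the axioms of Definition~\ref{def:pDD}, obtains the slopes from $F^{10m}M=p^{5m}M$ via Zink's criterion, and identifies the reduction with $\gamma_{3,4}=(1,3)(2,4)$ by running Moonen's algorithm on an explicit final filtration. (One simplification relative to your worries: you do not need to avoid ``unwanted twists'' of the $\ff_{p^2}$-action --- identifying the Ekedahl--Oort class of $M/pM$ through Moonen's classification is exactly what the lemma requires, since it only asks for an isomorphism respecting action and polarization.)

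A smaller but concrete defect is your proposed ``clean'' slope test $F^2=p\cdot u$ with $u$ a unit on $M$ itself: this can never hold for a lattice whose reduction is $G_{\gamma_{3,4}}$. Indeed $F^2M=pM$ forces $VM=pF^{-1}M=F^{-1}(F^2M)=FM$, hence equal images of $F$ and $V$ in $N=M/pM$; but for the $\gamma_{3,4}$ module these images are two distinct $5$-dimensional subspaces (in the paper's coordinates, $FN=\langle e_1,e_4,f_2,f_3,f_5\rangle$ while $VN=\langle e_2,e_5,f_1,f_3,f_4\rangle$). Passing to a commensurable lattice $M'$ with $F^2M'=pM'$ is of course possible once one already knows the isocrystal is isoclinic of slope $\tfrac12$, but then the link to the mod-$p$ structure is lost, so it cannot serve as the verification. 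Likewise, reading the slopes off the characteristic polynomial of the matrix of $F$ is not valid in general because $F$ is only $\mathrm{Frob}$-semilinear. The robust computation is the one the paper uses, $\lim_{n\to\infty}\tfrac1n\max\{k:F^nM\subset p^kM\}=\tfrac12$, and carrying it out again presupposes having the explicit module in hand.
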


\begin{proof}
We aim to construct a supersingular unitary $p$-divisible group $X_{3,4}$ of signature $(3,2)$ such that $X_{3,4}[p] \cong G_{\gamma_{3,4}}$. Using contravariant  $p$-adic Dieudonné theory, it suffices to construct a unitary $p$-adic Dieudonné module $M$ of signature $(3,2)$ such that all of the slopes of the isocrystal $M[\frac{1}{p}]$ are equal to $\frac{1}{2}$ and such that $M/pM$ is isomorphic to the mod-$p$ Dieudonné module of $G_{\gamma_{3,4}}$.

Let $M$ be the free $\breve{\mathbx{Z}}_p$-module with basis $\{ e_i, f_i \}_{1\leq i \leq 5}$. Let $M_1$ be the submodule spanned by $\{e_i\}_{1\leq i \leq 5}$ and let $M_2$ be the submodule spanned by $\{f_i\}_{1\leq i \leq 5}$. Define $F$ (resp. $V$) as the $\mathrm{Frob}$-semilinear (resp. $\mathrm{Frob}^{-1}$-semilinear) operator defined on the basis of $M$ as in Table \ref{Table34}  below. 

\renewcommand{\arraystretch}{1.4}
\begin{table}[h]
    \begin{tabular}{|c | c | c | c|}
    \hline
    $F(e_1) = f_5$ & $V(e_1) = pf_2$ & $F(f_1) = -pe_5$ & $V(f_1) = e_2$     \\
    \hline
    $F(e_2) = pf_1$ & $V(e_2) = f_3$ &  $F(f_2) = e_1$ & $V(f_2) = pe_3$    \\
    \hline
    $F(e_3) = f_2$ & $V(e_3) = f_4$  & $F(f_3) = pe_2$ & $V(f_3) = pe_4$   \\
    \hline
    $F(e_4) = f_3$ & $V(e_4) = pf_5$ & $F(f_4) = pe_3$  & $V(f_4) = e_5$   \\
    \hline
    $F(e_5) = pf_4$ & $V(e_5) = -f_1$ &  $F(f_5) = e_4$ & $V(f_5) = pe_1$    \\
    \hline
    \end{tabular}
    \caption{$F$ and $V$ on $M$ for $\gamma_{3,4}$.}\label{Table34} 
\end{table}

We define an alternating pairing $\langle \cdot , \cdot \rangle$ on $M$ by the condition that $\langle e_i, f_j \rangle = (-1)^{i-1} \delta_{ij}$ 
and claim that $(M, M = M_1 \oplus M_2, F, V, \langle \cdot , \cdot \rangle )$ is a unitary $p$-adic Dieudonné module of signature $(3,2)$ since it satisfies each condition of Definition \ref{def:pDD}. Indeed:

\begin{itemize}[leftmargin=2em]

\item{$M_1$ and $M_2$ both have rank 5, so $M = M_1 \oplus M_2$ has rank $10=2q$.}\hfill {\footnotesize (Conditions \eqref{def:pDD1} \& \eqref{def:pDD2})}
\item{The operators $F$ and $V$ are homogeneous of degree 1 with respect to the decomposition $M = M_1 \oplus M_2$. Let $\mathbf{A}_F$ and $\mathbf{A}_V$ be the matrices given by the action of $F$ and $V$, respectively, on the chosen basis. Since $\mathbf{A}_F$ and $\mathbf{A}_V$ have integer entries, to check that $F \circ V = V \circ F = p$, it suffices to verify that $\mathbf{A}_F \mathbf{A}_V = \mathbf{A}_V \mathbf{A}_F = p \mathrm{Id}$, which is true by construction.}\hfill {\footnotesize(Conditions \eqref{def:pDD3} \& \eqref{def:pDD6})}
\item{The condition that  $\langle e_i, f_j \rangle = (-1)^{i-1}\delta_{ij}$  extends uniquely to a perfect alternating $\breve{\mathbx{Z}}_p$-bilinear pairing on $M$, and under this pairing $M_1$ and $M_2$ are each totally isotropic. 

Let $\mathbf{B}$ be the matrix of this alternating form, and note that $\mathbf{B}$ has integer entries. To check that
$\langle Fx, y \rangle = \langle x, Vy \rangle^\mathrm{Frob},$
 it suffices to verify that $\mathbf{A}_F^T \mathbf{B} = \mathbf{B} \mathbf{A}_V$, which can be verified using the description of $F$ and $V$ in Table~\ref{Table34} and the definition of $\langle \cdot , \cdot \rangle$.} \hfill {\footnotesize(Conditions \eqref{def:pDD4} \& \eqref{def:pDD7})}
 
\item{From the definition of $F$, we have $M_1/ F M_2 \cong \mathrm{Span}_{\kk} \{ e_2, e_3, e_5  \}$ and $M_2/ F M_1 \cong \mathrm{Span}_{\kk} \{ f_1, f_4  \}$. In particular, $\mathrm{dim}_{\kk}(M_1/F M_2) = 3$ and $\mathrm{dim}_{\kk}(M_2/F M_1) = 2$.}\hfill {\footnotesize(Condition \eqref{def:pDD5})} 
\end{itemize}

By contravariant Dieudonné theory, $M$ defines a unitary $p$-divisible group $X_{3,4}$ of signature $(3,2)$.

We use \cite[Lemma 6.12]{zink68cartier} to compute the slopes of the isocrystal $M[\frac{1}{p}]$. Note that for any positive integer $m$, we find that
\[F^{10m}(M) = p^{5m}M, \]
and so $\frac{1}{10m}\mathrm{max}\{k \in \mathbx{Z}: F^{10m}M\subset p^kM\} = \frac{1}{2}.$ Therefore, 
\[ \lim_{n \to \infty} \tfrac{1}{n}\max\{k \in \mathbx{Z}: F^nM \subset p^kM\} = \tfrac{1}{2}, \]
and by  \cite{zink68cartier} all slopes of $M[\frac{1}{p}]$ are equal to $\frac{1}{2}$. Accordingly, $X_{3,4}$ is supersingular.

Finally, we show $X_{3,4}[p]=G_{\gamma_{3,4}}$. Define $N$ as $M/pM\cong \mathrm{Span}_{\kk} \{ e_i, f_i \}_1^5$, with splitting $N = N_1 \oplus N_2$ and $F$ and $V$ operators induced from those on $M$. We will follow the procedure of \cite[3.5]{Moonen2001} to compute the permutation $\omega \in \mathbf{W}(3,2)$ associated to $N$. Following from the definition of $F$ and $V$, the Dieudonné module $N$ has final filtration
\begin{gather*}
    0 \subset \langle f_3 \rangle \subset \langle e_4, f_3 \rangle \subset \langle e_4, f_3, f_5 \rangle \subset \langle e_1, e_4, f_3,  f_5 \rangle \subset \langle e_1, e_4, f_2, f_3, f_5  \rangle \subset  \langle e_1, e_2, e_4, f_2, f_3, f_5 \rangle \\ 
    \subset \langle e_1, e_2, e_4, f_1, f_2, f_3, f_5  \rangle \subset \langle e_1, e_2, e_4, e_5, f_1, f_2, f_3, f_5 \rangle \subset \langle e_1, e_2, e_4,  e_5, f_1, f_2, f_3, f_4, f_5 \rangle \subset N.
\end{gather*}

Intersecting with $N_1$ gives the filtration $C_{1,\bullet}$
$$0 \subset \langle e_4 \rangle \subset \langle e_1, e_4 \rangle \subset \langle e_1, e_2, e_4 \rangle \subset \langle e_1, e_2,  e_4, e_5 \rangle \subset N_1.$$

The function $\eta_1(j)= \dim(C_{1,j} \cap N[F])$ is then given by
\[\eta_1(1) = \eta_1(2) = 0, \hspace{1cm}
\eta_1(3) = 1,\hspace{1cm} \eta_1(4) = \eta_1(5) = 2.\]

The permutation $\omega$ corresponding to $\eta$ is $(1,3)(2,4)$. As $\gamma_{3,4}$ is also equal to $(1,3)(2,4)$, it follows that $X_{3,4}[p] \cong G_{\gamma_{3,4}}$, which finishes the proof.
\end{proof}

Recall that the supersingular locus $\mathcal{M}(3,2)^{\ss}$ is uniformized by a formal scheme called a Rapoport-Zink space. As a framing object, let $(\mathbx{X}, \iota_{\mathbx{X}}, \lambda_{\mathbx{X}} )$ 
 be a fixed supersingular unitary $p$-divisible group of signature $(3,2)$  over $\kk$.

\begin{definition}\label{def:RZ}
For any scheme $S$ over $\kk$, denote by $\mathcal{N}(3,2)(S)$ the set of isomorphism classes of tuples $(X, \iota_X, \lambda_X, \rho_X)$, where:
\begin{itemize}
\item{$(X, \iota_X, \lambda_X)$ is a unitary $p$-divisible group of signature $(3,2)$ over $S$;}
\item{$\rho_X: X \rightarrow \mathbx{X}$ is an $\mathcal{O}_K$-linear quasi-isogeny identifying  $\lambda_X$ and $\lambda_{\mathbx{X}}$ up to scaling in $\mathbx{Q}_p^\times$.}
    
\end{itemize}

By \cite{RZ}, the functor defined above is represented by a formal scheme over $\kk$ which is locally formally of finite type; we will also denote the underlying reduced scheme of this representing object (a ``signature $(3,2)$ unitary Rapoport-Zink space") as $\mathcal{N}(3,2)$.

\end{definition}

\begin{proposition}\label{p:gamma34}
  The Ekedahl-Oort stratum $\mathcal{M}(3,2)_{\gamma_{3,4}}$ intersects the supersingular locus.
\end{proposition}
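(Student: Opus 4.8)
The plan is to combine Lemma~\ref{l:explicit} with Rapoport-Zink uniformization to produce an actual $\kk$-point of $\mathcal{M}(3,2)$ lying in the desired intersection. Lemma~\ref{l:explicit} already gives a supersingular unitary $p$-divisible group $X_{3,4}$ of signature $(3,2)$ over $\kk$ with $X_{3,4}[p] \cong G_{\gamma_{3,4}}$. First I would observe that, since all supersingular unitary $p$-divisible groups of signature $(3,2)$ over $\kk$ are isogenous (this is what supersingular/basic means), there is an $\mathcal{O}_K$-linear quasi-isogeny $\rho\colon X_{3,4} \to \mathbx{X}$ to the framing object, which can be arranged to respect the polarizations up to $\mathbx{Q}_p^\times$-scaling. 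Hence the tuple $(X_{3,4}, \iota_{X_{3,4}}, \lambda_{X_{3,4}}, \rho)$ defines a $\kk$-point of the Rapoport-Zink space $\mathcal{N}(3,2)$.

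Next I would invoke the Rapoport-Zink uniformization theorem for the supersingular locus of $\mathcal{M}(3,2)$ (see \cite{RZ}, or the discussion in \cite{vollaard,RNT} in this PEL setting): there is a uniformization morphism relating $\mathcal{N}(3,2)$ (a disjoint union of copies of it, quotiented by a suitable $p$-adic group acting via the prime-to-$p$ Hecke action and level structure) to the completion of $\mathcal{M}(3,2)$ along its supersingular locus $\mathcal{M}(3,2)^{\ss}$. In particular, every $\kk$-point of $\mathcal{N}(3,2)$ gives rise to a $\kk$-point $(A, \iota, \lambda, \xi)$ of $\mathcal{M}(3,2)^{\ss}$ whose associated $p$-divisible group $A[p^\infty]$ with its $\mathcal{O}_K \otimes \zz_p$-action and polarization is isomorphic to $(X_{3,4}, \iota_{X_{3,4}}, \lambda_{X_{3,4}})$ — one simply needs the abelian variety to exist with the prescribed $p$-divisible group, which is exactly what uniformization provides (choosing any compatible level structure $\xi$ away from $p$).

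Then I would conclude: since $A[p^\infty] \cong X_{3,4}$ as unitary $p$-divisible groups, the isogeny class of $A[p^\infty]$ is supersingular, so $(A,\iota,\lambda,\xi) \in \mathcal{M}(3,2)^{\ss}$; and since $A[p] \cong X_{3,4}[p] \cong G_{\gamma_{3,4}}$ respecting the induced action and polarization, the point $(A,\iota,\lambda,\xi)$ lies in the Ekedahl-Oort stratum $\mathcal{M}(3,2)_{\gamma_{3,4}}$ by the definition of the Ekedahl-Oort stratification in Section~\ref{ss:EO}. Therefore $\mathcal{M}(3,2)_{\gamma_{3,4}} \cap \mathcal{M}(3,2)^{\ss} \neq \emptyset$, as claimed.

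The main obstacle — more a point requiring care than a genuine difficulty — is checking that the quasi-isogeny $\rho$ can be chosen to be $\mathcal{O}_K$-linear and to identify the two polarizations up to a scalar in $\mathbx{Q}_p^\times$ simultaneously; this is where one needs that the group of self-quasi-isogenies of $\mathbx{X}$ acts transitively enough, i.e. that the relevant inner form (a unitary group over $\mathbx{Q}_p$) is the one appearing in the Rapoport-Zink datum, so that $\mathcal{N}(3,2)(\kk)$ is genuinely nonempty and contains our $X_{3,4}$. A second point to state cleanly is that uniformization transports $p$-divisible groups with structure on the nose (not merely up to isogeny), so the $p$-torsion — and hence the Ekedahl-Oort stratum — is preserved; this is immediate from the construction of the uniformization map, under which the universal $p$-divisible group on $\mathcal{N}(3,2)$ pulls back from that on $\mathcal{M}(3,2)^{\ss}$.
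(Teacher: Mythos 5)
Your proposal is correct and follows essentially the same route as the paper: construct a $\kk$-point of the Rapoport-Zink space $\mathcal{N}(3,2)$ from the $p$-divisible group $X_{3,4}$ of Lemma~\ref{l:explicit}, then push it through the uniformization of $\mathcal{M}(3,2)^{\ss}$ to get a point whose $p$-torsion is $G_{\gamma_{3,4}}$. The one point you flag as needing care (choosing $\rho$ to be $\mathcal{O}_K$-linear and polarization-compatible up to $\mathbx{Q}_p^\times$-scaling) is exactly what the paper resolves by citing \cite[Lemma 6.1]{VollaardWedhorn}.
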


\begin{proof}

By the uniformizaton theorem of Rapoport and Zink \cite{RZ}, there exist groups $\{ \Gamma_j \}_{j=1}^n$ (arising as subgroups of the $\mathbx{Q}_p$-points of the algebraic group defining the automorphisms of $\mathbx{X}$, and depending on the level structure implicit in the definition of $\mathcal{M}(3,2)$) such that there is an isomorphism of schemes over $\kk$
\begin{align*}
\bigsqcup_{j=1}^n \mathcal{N}(3,2)/ \Gamma_j &\cong \mathcal{M}(3,2)^{\ss}.
\intertext{In particular, there is a surjection of $\kk$-points}
\bigsqcup_{j=1}^n \mathcal{N}(3,2)(\kk)  &\twoheadrightarrow \mathcal{M}(3,2)^{\ss}(\kk).
\end{align*}
Let $G_{\gamma_{3,4}}  $ be the $p$-torsion group scheme (with extra structure) occurring as $p$-torsion subgroup for points in the Ekedahl-Oort stratum $\mathcal{M}(3,2)_{\gamma_{3,4}}$. By Lemma \ref{l:explicit}, there exists a supersingular unitary $p$-divisible group $X_{3,4}$  of signature $(3,2)$ over $\kk$ such that $X_{3,4}[p] \cong G_{\gamma_{3,4}} $ . 

By \cite[Lemma 6.1]{VollaardWedhorn}, there exists a quasi-isogeny $\rho_{X_{3,4}}: X_{3,4} \rightarrow \mathbx{X}$ that is $\mathcal{O}_K$-linear and identifies the polarizations, up to $\mathbx{Q}_p^\times$-scaling. That is, $(X_{3,4}, \iota_{X_{3,4}}, \lambda_{X_{3,4}}, \rho_{X_{3,4}})$ defines a $\kk$-point of $\mathcal{N}(3,2)$. Let $(A_{3,4}, \iota_{A_{3,4}}, \lambda_{A_{3,4}}, \xi_{A_{3,4}} )$ be the image of $(X_{3,4}, \iota_{X_{3,4}}, \lambda_{X_{3,4}}, \rho_{X_{3,4}})$ in $\mathcal{M}(3,2)^{\ss}(\kk)$. 

Since $A_{3,4}[p] \cong X_{3,4}[p] \cong G_{\gamma_{3,4}}$, as the $p$-torsion group schemes equipped with extra structure, the $\kk$-point $(A_{3,4}, \iota_{A_{3,4}}, \lambda_{A_{3,4}}, \xi_{A_{3,4}})$ of $\mathcal{M}(3,2)$ lies in the Ekedahl-Oort stratum indexed by $\gamma_{3,4}$. That is, $(A_{3,4}, \iota_{A_{3,4}}, \lambda_{A_{3,4}}, \xi_{A_{3,4}} )$ is an explicit point in the intersection $\mathcal{M}(3,2)_{\gamma_{3,4}}\cap \mathcal{M}(3,2)^{\ss}$. 
\end{proof}

\begin{remark} \label{r:Coxeter}
We remark that there is a shorter but less elegant proof of Proposition~\ref{p:gamma34}. By \cite[Theorem A]{goertzhe}, $\mathcal{M}(3,2)$ is not of Coxeter type, so Ekedahl-Oort stratification does not refine the Newton stratification. There exists at least one Ekedahl-Oort stratum that intersects at least two Newton strata. It was already shown that every Ekedahl-Oort stratum apart from $\mathcal{M}(3,2)_{\gamma_{3,4}}$ is completely contained in one Newton stratum. Hence, it must be that $\mathcal{M}(3,2)_{\gamma_{3,4}}$ intersects $\mathcal{M}(3,2)^{\ss}$ as well as $\mathcal{M}(3,2)^{\beta_1}$. However, we believe that the explicit construction is more insightful.
\end{remark}

We conclude this section with a final result regarding the interaction between the Ekedahl-Oort stratum $\mathcal{M}(3,2)_{\gamma_{3,4}}$ and the supersingular locus of the Newton stratification. We saw in Proposition~\ref{p:gamma34} that these strata intersect, and a natural next question to ask is: what are the dimensions of the irreducible components of this intersection? The following corollary answers this.

\begin{corollary}\label{c:dimensionresult}
Every irreducible component of $\mathcal{M}(3,2)_{\gamma_{3,4}} \cap \mathcal{M}(3,2)^{\ss}$ has dimension $3$.

\end{corollary}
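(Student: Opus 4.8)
The plan is to compute the dimension of $\mathcal{M}(3,2)_{\gamma_{3,4}} \cap \mathcal{M}(3,2)^{\ss}$ by combining a lower bound coming from the Ekedahl-Oort stratum with an upper bound coming from the supersingular locus. First, recall from Lemma~\ref{l:p-rank} (specifically the discussion after it and Section~\ref{ss:EO}) that $\dim \mathcal{M}(3,2)_{\gamma_{3,4}} = \ell(\gamma_{3,4}) = 3+4-3 = 4$, and that $\mathcal{M}(3,2)_{\gamma_{3,4}}$ is dense in $\mathcal{M}(3,2)^{\beta_1}$, which is itself a $6-2 = 4$-dimensional locus (codimension equal to the number of ``breakpoints'' of $\beta_1$ below $\beta_{\mathrm{max}}$; in any case its dimension is $4$ by Equation~\eqref{eq:p-rank 0 closure} and the density statement). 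Since $\mathcal{M}(3,2)^{\beta_1}$ is open and dense in $\mathcal{M}(3,2)_{\gamma_{3,4}}$, the complement $\mathcal{M}(3,2)_{\gamma_{3,4}} \setminus \mathcal{M}(3,2)^{\beta_1}$ is a closed subset, and it is contained in $\mathcal{M}(3,2)^{\ss}$ by the list of Newton polygons (any point of $\mathcal{M}(3,2)_{\gamma_{3,4}}$ has $p$-rank $0$, and the only $p$-rank-$0$ strata are $\beta_1$ and $\beta_{\ss}$); hence $\mathcal{M}(3,2)_{\gamma_{3,4}} \cap \mathcal{M}(3,2)^{\ss} = \mathcal{M}(3,2)_{\gamma_{3,4}} \setminus \mathcal{M}(3,2)^{\beta_1}$ is a proper closed subset of the irreducible-dimension-$4$ stratum, so every component has dimension at most $3$.

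For the matching lower bound I would use the dimension of the supersingular locus $\mathcal{M}(3,2)^{\ss}$ together with purity. The supersingular locus of $\mathcal{M}(q-2,2)$ has been computed (e.g. via the Rapoport--Zink uniformization recalled in Definition~\ref{def:RZ} and the work of Shimada \cite{shimada2024supersingular}); for signature $(3,2)$ its dimension is $3$. Since $\mathcal{M}(3,2)_{\gamma_{3,4}} \cap \mathcal{M}(3,2)^{\ss}$ is nonempty by Proposition~\ref{p:gamma34}, and the Ekedahl-Oort stratification is given by a smooth morphism to a stack (so EO strata inside the supersingular locus are ``as large as possible'' in the appropriate sense), one argues that any nonempty intersection of an EO stratum with $\mathcal{M}(3,2)^{\ss}$ that is not all of $\mathcal{M}(3,2)^{\ss}$ still has codimension in $\mathcal{M}(3,2)^{\ss}$ controlled by the length of $\gamma_{3,4}$ relative to the generic EO stratum of the supersingular locus. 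Concretely: the supersingular locus is itself a union of EO strata-intersections, its open dense part lies in the EO stratum with the next-available index, and $\gamma_{3,4}$ is exactly the unique EO stratum of dimension $4$ meeting $\mathcal{M}(3,2)^{\ss}$ properly — so its intersection with the $3$-dimensional $\mathcal{M}(3,2)^{\ss}$ must be $2$- or $3$-dimensional, and being a nonempty EO-locally-closed subset of the irreducible-or-equidimensional supersingular locus of dimension $3$, it must actually be all of a union of top-dimensional pieces, i.e. dimension exactly $3$.

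The cleanest route, and the one I would actually write, is: $\mathcal{M}(3,2)_{\gamma_{3,4}} \cap \mathcal{M}(3,2)^{\ss}$ is open in $\mathcal{M}(3,2)^{\ss}$. Indeed, within $\mathcal{M}(3,2)^{\ss}$, which has $p$-rank $0$ and lies in the closure $\overline{\mathcal{M}(3,2)_{\gamma_{3,4}}}$, the EO stratum $\gamma_{3,4}$ is the \emph{largest} one meeting $\mathcal{M}(3,2)^{\ss}$ (all other EO strata meeting the supersingular locus are $\gamma_{u,v}$ with $u+v-3 < 4$ or are contained in $\mathcal{M}(3,2)^{\ss}$ with smaller dimension, by Lemma~\ref{l:Hoeve}, Corollary~\ref{c:generic}, and the dimension count); hence its complement in $\mathcal{M}(3,2)^{\ss}$ is closed, and $\mathcal{M}(3,2)_{\gamma_{3,4}} \cap \mathcal{M}(3,2)^{\ss}$ is a nonempty open subset of $\mathcal{M}(3,2)^{\ss}$. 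An open subset has the same dimension as the ambient space at each of its points, so every irreducible component of $\mathcal{M}(3,2)_{\gamma_{3,4}} \cap \mathcal{M}(3,2)^{\ss}$ has dimension equal to $\dim \mathcal{M}(3,2)^{\ss} = 3$ — provided $\mathcal{M}(3,2)^{\ss}$ is equidimensional of dimension $3$, which one cites from the Rapoport--Zink/Shimada analysis of the supersingular locus.

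The main obstacle I anticipate is justifying that $\mathcal{M}(3,2)_{\gamma_{3,4}} \cap \mathcal{M}(3,2)^{\ss}$ really is \emph{open} in $\mathcal{M}(3,2)^{\ss}$ — equivalently, that no larger-dimensional behaviour is hidden and that the supersingular locus is pure of dimension $3$ with $\gamma_{3,4}$ as its generic EO type. This forces me to input an external structural result about the supersingular locus of $\mathcal{M}(3,2)$ (its dimension and equidimensionality, and which EO stratum is dense in it), which is where Shimada's work and the Rapoport--Zink uniformization of Definition~\ref{def:RZ} do the heavy lifting; the EO-dimension bookkeeping from Section~\ref{ss:EO} and Lemma~\ref{l:p-rank} then closes the argument.
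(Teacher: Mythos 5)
Your final (``cleanest route'') argument is genuinely different from the paper's and is workable in outline, but two points need attention. The sound core of your route is: $\mathcal{M}(3,2)^{\ss}\subseteq\overline{\mathcal{M}(3,2)_{\gamma_{3,4}}}$ by Equation~\eqref{eq:p-rank 0 closure}, and since Ekedahl-Oort strata are locally closed, $\mathcal{M}(3,2)_{\gamma_{3,4}}$ is open in its closure, so $\mathcal{M}(3,2)_{\gamma_{3,4}}\cap\mathcal{M}(3,2)^{\ss}$ is a nonempty (Proposition~\ref{p:gamma34}) open subset of $\mathcal{M}(3,2)^{\ss}$; if $\mathcal{M}(3,2)^{\ss}$ is equidimensional of dimension $3$, the conclusion follows. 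However, your stated justification for openness --- that $\gamma_{3,4}$ is the ``largest'' EO stratum meeting $\mathcal{M}(3,2)^{\ss}$, hence its complement there is closed --- is a non sequitur (maximal dimension does not make a complement closed); you should instead invoke exactly the containment \eqref{eq:p-rank 0 closure} plus local closedness. The more substantive issue is that you outsource the key input, equidimensionality of $\mathcal{M}(3,2)^{\ss}$ of dimension $3$, to an external citation (Rapoport--Zink/Shimada). That fact is true, but it is precisely the nontrivial content the paper derives internally: the paper uses Wooding's closure diagram \cite{wooding_2016} to see that $\overline{\mathcal{M}(3,2)_{\gamma_{1,5}}}\subsetneq\mathcal{M}(3,2)^{\ss}$ and is disjoint from $\gamma_{2,3}$ and $\gamma_{3,4}$, applies the De Jong--Oort purity theorem \cite{DeJongOort} to conclude every component of $\mathcal{M}(3,2)^{\ss}\setminus\overline{\mathcal{M}(3,2)_{\gamma_{1,5}}}$ has dimension $3$, and then finishes by EO-dimension bookkeeping ($\gamma_{3,4}$ is the only EO stratum of dimension $\geq 3$ meeting that locus, hence is dense in it). Since the paper explicitly offers this as a self-contained alternative to Shimada's methods, your reliance on \cite{shimada2024supersingular} is logically admissible but gives up exactly what the paper's proof is designed to provide; a better fix within your framework would be to derive the dimension-$3$ equidimensionality yourself from purity applied to $\mathcal{M}(3,2)^{(f_A=0)}=\overline{\mathcal{M}(3,2)^{\beta_1}}$, using that $\mathcal{M}(3,2)^{\beta_1}$ is open and dense there. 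Finally, your first paragraph's upper bound assumes $\mathcal{M}(3,2)_{\gamma_{3,4}}$ is irreducible, which is neither established in the paper nor needed once you have the dimension of $\mathcal{M}(3,2)^{\ss}$; the middle paragraph's ``smooth morphism to a stack'' heuristic is not an argument and should be dropped.
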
 
\begin{proof}
First note that $\mathcal{M}(3,2)_{\gamma_{1,5}}$ is contained in the supersingular locus by Lemma~\ref{c:generic}. It follows that the closure $\overline{\mathcal{M}(3,2)_{\gamma_{1,5}}}$ is too, since the supersingular locus is closed. The strata $\mathcal{M}(3,2)_{\gamma_{2,3}}$ and $\mathcal{M}(3,2)_{\gamma_{3,4}}$ also intersect the supersingular locus by Lemma~\ref{c:generic} and Proposition~\ref{p:gamma34}. However, it is shown in \cite[Figure 3-3]{wooding_2016} that these Ekedahl-Oort strata are disjoint from $\overline{\mathcal{M}(3,2)_{\gamma_{1,5}}}$. Thus,
$$\overline{\mathcal{M}(3,2)_{\gamma_{1,5}}} \subsetneq \mathcal{M}(3,2)^{\ss}.$$ 

By the purity theorem \cite[Theorem 4.1]{DeJongOort}, every irreducible component of $\mathcal{M}(3,2)^{\ss} \setminus \overline{\mathcal{M}(3,2)_{\gamma_{1,5}}}$ has dimension $3$. Since $\mathcal{M}(3,2)_{\gamma_{3,4}}$ is the only Ekedahl-Oort stratum that intersects $\mathcal{M}(3,2)^{\ss} \setminus \overline{\mathcal{M}(3,2)_{\gamma_{1,5}}}$ and has dimension at least $3$, it follows that $\mathcal{M}(3,2)_{\gamma_{3,4}}$ must be dense in $\mathcal{M}(3,2)^{\ss} \setminus \overline{\mathcal{M}(3,2)_{\gamma_{1,5}}}$. Hence, every irreducible component of $\mathcal{M}(3,2)_{\gamma_{3,4}} \cap \mathcal{M}(3,2)^{\ss}$ has dimension 3.
\end{proof}

Note that Corollary \ref{c:dimensionresult} also follows from the work of Shimada \cite{shimada2024supersingular}, by very different methods. We include the self-contained argument above for the sake of completion.

\section{Classification of the interaction between the stratifications}

We are now in a position to state and prove the main result.
\begin{theorem} \label{thm:main}
The interaction between the Ekedahl-Oort stratification and the Newton stratification of $\mathcal{M}(3,2)$ is as follows:
\begin{enumerate}[label = (\roman*)]
    \item The Ekedahl-Oort strata $\mathcal{M}(3,2)_{\gamma_{1,2}}$, $\mathcal{M}(3,2)_{\gamma_{1,3}}$, $\mathcal{M}(3,2)_{\gamma_{1,4}}$, $\mathcal{M}(3,2)_{\gamma_{1,5}}$ and $\mathcal{M}(3,2)_{\gamma_{2,3}}$ are contained in the Newton stratum $\mathcal{M}(3,2)^{\ss}$; \label{item:ss}
    \item The Ekedahl-Oort stratum $\mathcal{M}(3,2)_{\gamma_{2,4}}$ is contained in the Newton stratum $\mathcal{M}(3,2)^{\beta_1}$; \label{item:24}
    \item The Ekedahl-Oort stratum $\mathcal{M}(3,2)_{\gamma_{3,4}}$ intersects both $\mathcal{M}(3,2)^{\ss}$ and $\mathcal{M}(3,2)^{\beta_1}$; \label{item:34}
    \item The Ekedahl-Oort strata $\mathcal{M}(3,2)_{\gamma_{2,5}}$ and $\mathcal{M}(3,2)_{\gamma_{3,5}}$ are contained in the Newton stratum $\mathcal{M}(3,2)^{\beta_2}$; \label{item:p-rank2}\vspace*{-1em}
    \item The Ekedahl-Oort stratum $\mathcal{M}(3,2)_{\gamma_{4,5}}$ equals the Newton stratum $\mathcal{M}(3,2)^{\ord}$. \label{item:ord}
\end{enumerate}
\end{theorem}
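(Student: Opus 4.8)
\textbf{Proof plan for Theorem~\ref{thm:main}.}
The strategy is to assemble the five parts directly from the results proved in Sections~\ref{s:p-rank}--\ref{s:construction}, since each part is essentially a bookkeeping statement collecting one or two earlier lemmas. The only genuine work left at this stage is to confirm that the ten Ekedahl-Oort strata have been accounted for exhaustively and that the containments are not merely intersections. First I would dispose of parts \ref{item:ss} and \ref{item:24}: the strata $\mathcal{M}(3,2)_{\gamma_{1,2}}$ and $\mathcal{M}(3,2)_{\gamma_{1,3}}$ lie in $\mathcal{M}(3,2)^{\ss}$ by Lemma~\ref{l:Hoeve}; the strata $\mathcal{M}(3,2)_{\gamma_{1,4}}$, $\mathcal{M}(3,2)_{\gamma_{1,5}}$ and $\mathcal{M}(3,2)_{\gamma_{2,3}}$ lie in $\mathcal{M}(3,2)^{\ss}$ by Corollary~\ref{c:generic}; and $\mathcal{M}(3,2)_{\gamma_{2,4}} \subseteq \mathcal{M}(3,2)^{\beta_1}$ by Lemma~\ref{l:gamma24}. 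Part~\ref{item:ord} is Equation~\eqref{eq:p-rank 4} of Lemma~\ref{l:p-rank}, and part~\ref{item:34} combines Equation~\eqref{eq:p-rank 0 closure} (which shows $\mathcal{M}(3,2)_{\gamma_{3,4}}$ is dense in, hence meets, $\mathcal{M}(3,2)^{\beta_1}$) with Proposition~\ref{p:gamma34} (which shows it also meets $\mathcal{M}(3,2)^{\ss}$).

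The part requiring a small argument is \ref{item:p-rank2}. Here I would invoke Lemma~\ref{l:p-rank}, Equation~\eqref{eq:p-rank 2}, which says $\mathcal{M}(3,2)^{(f_A=2)} = \mathcal{M}(3,2)^{\beta_2} = \mathcal{M}(3,2)_{\gamma_{3,5}} \cup \mathcal{M}(3,2)_{\gamma_{2,5}}$. Since $\mathcal{M}(3,2)^{\beta_2}$ is the unique Newton stratum of $p$-rank $2$ and both $\gamma_{3,5}$ and $\gamma_{2,5}$ Ekedahl-Oort strata sit inside the $p$-rank $2$ locus, the fact that the Newton stratification refines the $p$-rank stratification (recorded in Section~\ref{s:p-rank}) forces both of these Ekedahl-Oort strata to be contained in $\mathcal{M}(3,2)^{\beta_2}$. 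The same refinement principle gives the containment in part~\ref{item:ord} once one knows $\mathcal{M}(3,2)^{(f_A=4)}$ consists of a single Ekedahl-Oort stratum and a single Newton stratum.

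Finally I would check completeness: the ten strata $\gamma_{1,2}, \gamma_{1,3}, \gamma_{1,4}, \gamma_{1,5}, \gamma_{2,3}, \gamma_{2,4}, \gamma_{2,5}, \gamma_{3,4}, \gamma_{3,5}, \gamma_{4,5}$ listed in Section~\ref{ss:EO} are partitioned as $\{\gamma_{1,2},\gamma_{1,3},\gamma_{1,4},\gamma_{1,5},\gamma_{2,3}\}$ (part~\ref{item:ss}), $\{\gamma_{2,4}\}$ (part~\ref{item:24}), $\{\gamma_{3,4}\}$ (part~\ref{item:34}), $\{\gamma_{2,5},\gamma_{3,5}\}$ (part~\ref{item:p-rank2}), $\{\gamma_{4,5}\}$ (part~\ref{item:ord}), which exhausts the list. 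I do not anticipate a serious obstacle: every ingredient is already in hand, and the ``hard part'' — producing a point of $\mathcal{M}(3,2)_{\gamma_{3,4}} \cap \mathcal{M}(3,2)^{\ss}$, which is what prevents the Ekedahl-Oort stratification from refining the Newton stratification — was carried out in Proposition~\ref{p:gamma34}. The only thing to be careful about is distinguishing ``is contained in'' from ``intersects'': for $\gamma_{3,4}$ one must not overclaim containment, and for the $p$-rank $0$ strata other than $\gamma_{3,4}$ one must cite the slope bounds (Corollary~\ref{c:generic}), not just the $p$-rank, to get containment in the \emph{supersingular} stratum rather than merely in the $p$-rank $0$ locus.
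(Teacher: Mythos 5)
Your proposal is correct and follows essentially the same route as the paper's proof: Lemma~\ref{l:Hoeve} and Corollary~\ref{c:generic} for part~\ref{item:ss}, Lemma~\ref{l:gamma24} for part~\ref{item:24}, Proposition~\ref{p:gamma34} together with Equation~\eqref{eq:p-rank 0 closure} for part~\ref{item:34}, and Lemma~\ref{l:p-rank} for parts~\ref{item:p-rank2} and~\ref{item:ord}. The extra bookkeeping you add (exhaustiveness of the ten strata and the containment-versus-intersection caveat) is sound but not needed beyond what the cited results already give.
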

\begin{proof}
Item~\ref{item:ss} is obtained by combining Lemma~\ref{l:Hoeve} and Corollary~\ref{c:generic}. Item~\ref{item:24} is the content of Lemma~\ref{l:gamma24}. Next, Item~\ref{item:34} follows from combining Proposition~\ref{p:gamma34} and Equation~\eqref{eq:p-rank 0 closure}. Finally, Items~\ref{item:p-rank2} and \ref{item:ord} follow from Lemma~\ref{l:p-rank}.
\end{proof}

%---------------
%References

% \newpage
\renewcommand\bibpreamble{\vspace*{-0.1\baselineskip}}

\bibliography{citations.bib}{}

\newcommand{\etalchar}[1]{$^{#1}$}
\providecommand{\bysame}{\leavevmode\hbox to3em{\hrulefill}\thinspace}
\providecommand{\MR}{\relax\ifhmode\unskip\space\fi MR }
% \MRhref is called by the amsart/book/proc definition of \MR.
\providecommand{\MRhref}[2]{%
  \href{http://www.ams.org/mathscinet-getitem?mr=#1}{#2}
}
\providecommand{\href}[2]{#2}
\begin{thebibliography}{ABF{\etalchar{+}}24}

\bibitem[ABF{\etalchar{+}}24]{RNT}
Emerald Andrews, Deewang Bhamidipati, Maria Fox, Heidi Goodson, Steven Groen, and Sandra Nair, \emph{Ekedahl-{O}ort strata in the {$\mathsf{GU}(q-2,2)$} {S}himura variety}, arXiv (2024), 2405.04464.

\bibitem[BW06]{bultel2006congruence}
Oliver B{\"u}ltel and Torsten Wedhorn, \emph{Congruence relations for shimura varieties associated to some unitary groups}, Journal of the Institute of Mathematics of Jussieu \textbf{5} (2006), no.~2, 229--261.

\bibitem[dJO00]{DeJongOort}
Aise~Johan de~Jong and Frans Oort, \emph{Purity of the stratification by newton polygons}, Journal of the American Mathematical Society \textbf{13} (2000), no.~1, 209–241.

\bibitem[GH15]{goertzhe}
Ulrich G\"{o}rtz and Xuhua He, \emph{Basic loci of {C}oxeter type in {S}himura varieties}, Camb. J. Math. \textbf{3} (2015), no.~3, 323--353. \MR{3393024}

\bibitem[Har07]{Harashita_slope}
Shushi Harashita, \emph{Ekedahl-oort strata and the first newton slope strata}, Journal of Algebraic Geometry \textbf{16} (2007), no.~1, 171--199.

\bibitem[Har10]{Harashita_polygon}
Shushi Harashita, \emph{Generic {Newton} polygons of {Ekedahl}-{Oort} strata: {Oort}'s conjecture}, Ann. Inst. Fourier \textbf{60} (2010), no.~5, 1787--1830 (English).

\bibitem[Hoe09]{Hoeve2009}
Maarten Hoeve, \emph{Ekedahl-oort strata in the supersingular locus}, Journal of the London Mathematical Society \textbf{81} (2009), no.~1, 129--141.

\bibitem[HP14]{howard2014supersingular}
Benjamin Howard and Georgios Pappas, \emph{On the supersingular locus of the {$GU(2,2)$} shimura variety}, Algebra Number Theory \textbf{8} (2014), no.~7, 1659--1699.

\bibitem[Kot92]{Ko}
Robert~E. Kottwitz, \emph{Points on some {S}himura varieties over finite fields}, J. Amer. Math. Soc. \textbf{5} (1992), no.~2, 373--444. \MR{1124982}

\bibitem[Moo01]{Moonen2001}
Ben Moonen, \emph{{Group Schemes with Additional Structures and Weyl Group Cosets}}, pp.~255--298, Birkh{\"a}user Basel, Basel, 2001.

\bibitem[Moo04]{MoonenST}
\bysame, \emph{Serre-{T}ate theory for moduli spaces of {PEL} type}, Ann. Sci. \'Ecole Norm. Sup. (4) \textbf{37} (2004), no.~2, 223--269. \MR{2061781}

\bibitem[Oor05a]{Oort05minimal}
Frans Oort, \emph{Minimal p-divisible groups}, Annals of {M}athematics \textbf{161} (2005), 1021--1036.

\bibitem[Oor05b]{Oort05simple}
\bysame, \emph{Simple p-kernels of p-divisible groups}, Advances in {M}athematics \textbf{198} (2005), 275--310.

\bibitem[RR96]{rapoport1996classification}
Michael Rapoport and Melanie Richartz, \emph{On the classification and specialization of $ f $-isocrystals with additional structure}, Compositio Mathematica \textbf{103} (1996), no.~2, 153--181.

\bibitem[RZ96]{RZ}
M.~Rapoport and Th. Zink, \emph{Period spaces for {$p$}-divisible groups}, Annals of Mathematics Studies, vol. 141, Princeton University Press, Princeton, NJ, 1996. \MR{1393439}

\bibitem[Shi24]{shimada2024supersingular}
Ryosuke Shimada, \emph{On the supersingular locus of the {$\mathrm{GU}(2,n-2)$} {S}himura variety}, arXiv (2024), 2410.05110.

\bibitem[Vol10]{vollaard}
Inken Vollaard, \emph{The supersingular locus of the {S}himura variety for {${\rm GU}(1,s)$}}, Canad. J. Math. \textbf{62} (2010), no.~3, 668--720. \MR{2666394}

\bibitem[VW11]{VollaardWedhorn}
Inken Vollaard and Torsten Wedhorn, \emph{The supersingular locus of the {S}himura variety of {$GU(1,n-1)$} {II}}, Inventiones {M}athematicae \textbf{184} (2011), 591--627.

\bibitem[VW13]{ViehmannWedhorn2013}
Eva Viehmann and Torsten Wedhorn, \emph{Ekedahl-{O}ort and {N}ewton strata for {S}himura varieties of {PEL} type}, Math. Ann. \textbf{356} (2013), no.~4, 1493--1550. \MR{3072810}

\bibitem[Woo16]{wooding_2016}
Amy Wai Ling~Jane Wooding, \emph{{The Ekedahl-Oort stratification of unitary Shimura varieties}}, Ph.D. thesis, McGill University, 2016.

\bibitem[Zin84]{zink68cartier}
Thomas Zink, \emph{Cartiertheorie kommutativer former gruppen}, Teubner-Texte zur Mathematik, 1984.

\end{thebibliography}
\bibliographystyle{amsalpha}

\end{document}